\numberwithin{equation}{section}
\newtheorem{thm}{Theorem}[section]
\newtheorem{defn}[thm]{Definition}
\newtheorem{lem}[thm]{Lemma}
\newtheorem{prop}[thm]{Proposition}
\newtheorem{cor}[thm]{Corollary}
\def\th@newremark{\th@remark\thm@headfont{\bfseries}}
\theoremstyle{newremark}
\newtheorem{rmk}[thm]{Remark}
\newtheorem{eg}[thm]{Example}
\newcommand{\RR}{\mathbb{R}}      
\newcommand{\cC}{\mathcal{C}}
\newcommand{\dD}{\mathcal{D}}
\newcommand{\eE}{\mathcal{E}}
\newcommand{\lL}{\mathcal{L}}
\newcommand{\nN}{\mathcal{N}}
\newcommand{\oO}{\mathcal{O}}
\newcommand{\pP}{\mathcal{P}}
\newcommand{\sS}{\mathcal{S}}
\newcommand{\wW}{\mathcal{W}}
\newcommand{\s}{\textbf{s}}
\newcommand{\uu}{\textbf{u}}
\newcommand{\vv}{\textbf{v}}
\newcommand*\Bell{\ensuremath{\boldsymbol\ell}}
\newcommand*\Brho{\ensuremath{\boldsymbol\rho}}
\colorlet{symbols}{blue!90!black}
\colorlet{testcolor}{green!60!black}
\def\drawx{\draw[-,solid] (-3pt,-3pt) -- (3pt,3pt);\draw[-,solid] (-3pt,3pt) -- (3pt,-3pt);}
\tikzset{
	root/.style={circle,fill=testcolor,inner sep=0pt, minimum size=2mm},
	dot/.style={circle,fill=black,inner sep=0pt, minimum size=1mm},
	var/.style={circle,fill=black!10,draw=black,inner sep=0pt, minimum size=
	2mm},
	dotred/.style={circle,fill=black!50,inner sep=0pt, minimum size=2mm},
	generic/.style={semithick,shorten >=1pt,shorten <=1pt},
	dist/.style={ultra thick,draw=testcolor,shorten >=1pt,shorten <=1pt},
	testfcn/.style={ultra thick,testcolor,shorten >=1pt,shorten <=1pt,<-},
	testfcnx/.style={ultra thick,testcolor,shorten >=1pt,shorten <=1pt,<-,
		postaction={decorate,decoration={markings,mark=at position 0.6 with {\drawx}}}},
	kepsilon/.style={semithick,shorten >=1pt,shorten <=1pt,densely dashed,->},
	kprimex/.style={semithick,shorten >=1pt,shorten <=1pt,densely dashed,->,
		postaction={decorate,decoration={markings,mark=at position 0.4 with {\drawx}}}},
	kernel/.style={semithick,shorten >=1pt,shorten <=1pt,->},
	multx/.style={shorten >=1pt,shorten <=1pt,
		postaction={decorate,decoration={markings,mark=at position 0.5 with {\drawx}}}},
	kernelx/.style={semithick,shorten >=1pt,shorten <=1pt,->,
		postaction={decorate,decoration={markings,mark=at position 0.4 with {\drawx}}}},
	kernel1/.style={->,semithick,shorten >=1pt,shorten <=1pt,postaction={decorate,decoration={markings,mark=at position 0.45 with {\draw[-] (0,-0.1) -- (0,0.1);}}}},
	kernel2/.style={->,semithick,shorten >=1pt,shorten <=1pt,postaction={decorate,decoration={markings,mark=at position 0.45 with {\draw[-] (0.05,-0.1) -- (0.05,0.1);\draw[-] (-0.05,-0.1) -- (-0.05,0.1);}}}},
	kernelBig/.style={semithick,shorten >=1pt,shorten <=1pt,decorate, decoration={zigzag,amplitude=1.5pt,segment length = 3pt,pre length=2pt,post length=2pt}},
	gepsilon/.style={dotted,semithick,shorten >=1pt,shorten <=1pt},
	renorm/.style={shape=circle,fill=white,inner sep=1pt},
	labl/.style={shape=rectangle,fill=white,inner sep=1pt},
	xi/.style={circle,fill=symbols!10,draw=symbols,inner sep=0pt,minimum size=1.2mm},
	xix/.style={crosscircle,fill=symbols!10,draw=symbols,inner sep=0pt,minimum size=1.2mm},
	xib/.style={circle,fill=symbols!10,draw=symbols,inner sep=0pt,minimum size=1.6mm},
	xibx/.style={crosscircle,fill=symbols!10,draw=symbols,inner sep=0pt,minimum size=1.6mm},
	not/.style={circle,fill=symbols,draw=symbols,inner sep=0pt,minimum size=0.5mm},
	>=stealth,
	}
\def\DeclareSymbol#1#2#3{\expandafter\gdef\csname MH@symb@#1\endcsname{\tikz[baseline=#2,scale=0.15,draw=symbols]{#3}}\expandafter\gdef\csname MH@symb@#1s\endcsname{\scalebox{0.7}{\tikz[baseline=#2,scale=0.15,draw=symbols]{#3}}}}
\def\<#1>{\csname MH@symb@#1\endcsname}
\begin{document}

\title{Inverting the signature of a path}

\author{
Terry J. Lyons\\
\textit{University of Oxford}\\
\and
Weijun Xu\\
\textit{University of Warwick}
}

\maketitle

\abstract{The aim of this article is to develop an explicit procedure that enables one to reconstruct any $\cC^{1}$ path (at natural parametrization) from its signature. We also explicitly quantify the distance between the reconstructed path and the original path in terms of the number of terms in the signature that are used for the construction and the modulus of continuity of the derivative of the path. A key ingredient in the construction is the use of a procedure of symmetrization that separates the behavior of the path at small and large scales. }

\begin{flushleft}
\textbf{Key words:} signature, inversion, symmetrization. 

\medskip

\textbf{AMS Classification:} 70G, 93A. 
\end{flushleft}

\section{Introduction}

\subsection{The signature of a path}

Given an integer $d \geq 1$, a $d$-dimensional path $\gamma$ is a continuous function mapping a closed interval $[0,T]$ into $\RR^{d}$. Throughout the article, we equip $\RR^{d}$ with the $\ell^{1}$ norm. The length of $\gamma$ under this norm is then
\begin{align*}
\|\gamma\| := \sup_{\pP} \sum_{j} |\gamma_{u_{j}} - \gamma_{u_{j-1}}|, 
\end{align*}
where the supremum is taken over all partitions of the interval $[0,T]$, and $|\textbf{x}| := \sum_{i=1}^{d} |x_{i}|$ for $\textbf{x} = (x_{1}, \dots, x_{d}) \in \RR^{d}$. We say $\gamma$ is a path of \textit{finite length} if $\|\gamma\| < +\infty$. 

Given two paths $\alpha: [0,S] \rightarrow \RR^{d}$ and $\beta: [0,T] \rightarrow \RR^{d}$, their \textit{concatenation} $\alpha * \beta$ is a new path defined on the interval $[0,S+T]$ by
\begin{equation} \label{eq:concatenation}
\alpha*\beta(u) := \left \{
\begin{array}{rl}
&\alpha(u), u \in [0,S]\\
&\beta(u-S)+\alpha(S)-\beta(0), u \in [S,S+T]
\end{array} \right.. 
\end{equation}
If $\gamma$ has finite length, then its derivative $\dot{\gamma}$ exists for almost every $t \in [0,T]$. We can re-parametrize $\gamma$ in the fixed time interval $[0,1]$ in such a way that
\begin{align*}
|\dot{\gamma}_{t}| = \sum_{i=1}^{d} |\dot{\gamma}_{t}^{i}| \equiv L, 
\end{align*}
where $\dot{\gamma}^{i}$'s are the components of $\dot{\gamma}$, and $L$ is the length of $\gamma$ under $\ell^{1}$ norm. We call such a parametrization the \textit{natural parametrization} of $\gamma$. 

\begin{rmk}
	The notion of natural parametrization used here is slightly different from the standard one in the literature, as we parametrize $\gamma$ in the unit interval rather than $[0,L]$. As a consequence, the constant speed is $L$ instead of $1$. The reason of using this parametrization is that later on we will compare paths with different lengths, so it will be convenient for us to parametrize all of them in the same interval. 
\end{rmk}

If $\gamma$ has finite length, then one can define a sequence of iterated integrals of it, called the \textit{signature} of $\gamma$. We first introduce the notion of words before giving a precise definition of signature. 

Let $\{e_{1}, \dots, e_{d}\}$ denote the standard basis of $\RR^{d}$. For every integer $n \geq 0$, a word of length $n$ is an ordered sequence of $n$ letters from the set $\{e_{1}, \dots, e_{d}\}$ (with repetition allowed), and we use $|w|$ to denote the length of $w$. For two words $w_{1} = e_{i_{1}} \cdots e_{i_{n}}$ and $w_{2} = e_{j_{1}} \dots e_{j_{m}}$, their \textit{concatenation} $w_{1} * w_{2}$ is a word of length $n+m$ given by
\begin{align*}
w_{1} * w_{2} = e_{i_{1}} \dots e_{i_{n}} e_{j_{1}} \dots e_{j_{m}}. 
\end{align*}
We also use $\emptyset$ to denote the empty word, which is the unique word of length $0$. The signature of a finite length path can now be defined as follows. 

\begin{defn} \label{de:signature}
	Let $\gamma: [0,T] \rightarrow \RR^{d}$ be a path of finite length. For every integer $n$ and every word $w = e_{i_{1}} \cdots e_{i_{n}}$, define
	\begin{align*}
	C_{\gamma}(w) := \int_{0 < u_{1} < \cdots < u_{n} < T} d \gamma_{u_{1}}^{i_{1}} \cdots d \gamma_{u_{n}}^{i_{n}}, 
	\end{align*}
	where $\gamma^{i}$ is the component of $\gamma$ in the direction $e_{i}$. The signature of $\gamma$ is the formal power series
	\begin{align*}
	X(\gamma) = \sum_{n=0}^{+\infty} \sum_{|w|=n} C_{\gamma}(w) w, 
	\end{align*}
	where we have set $C_{\gamma}(\emptyset) = 1$. 
\end{defn}

The signature of $\gamma$ is a definite integral over a fixed time interval where $\gamma$ is defined. Re-parametrizing $\gamma$ does not change its signature. One reason to look at the signatures is that they contain important information about the paths. For example, the first level coefficients $\{C_{\gamma}(w): |w|=1\}$ reproduce the increment of the path, and the second level collection $\{C_{\gamma}(w): |w|=2\}$ represents the area enclosed by the projections of the path on the $e_{i}-e_{j}$ planes. 

The study of the signature dates back to K.T.Chen in 1950's. In a series of papers (\cite{Chen54}, \cite{Chen57}, \cite{Chen58}), he showed that the map $\gamma \mapsto X(\gamma)$ is a homomorphism from the monoid of paths with concatenation to the tensor algebra over $\RR^{d}$, and proved that two piecewise regular paths without backtracks have the same signatures if and only if they differ by a re-parametrization and translation. This uniqueness result, modulo tree-like equivalence, was extended to all finite length paths (in \cite{sig_uniqueness}) and finally to all geometric rough paths\footnote{See \cite{rp_uniqueness} or \cite{rp_bible} for the definition of the signature of a rough path.} (in \cite{rp_uniqueness}). However, all the proofs there are non-constructive, and a natural question that arises after these works is that how one can reconstruct a path from its signature. In this article, we consider the inversion problem when $\gamma$ is $\cC^{1}$ at natural parametrization, which automatically implies that $\gamma$ has finite length and does not have any backtracks in its trajectory.

\begin{rmk} \label{rm:problem}
	In the case $d=1$, all back-forth movements of the path are cancelled out and only the total increment counts. The signature $X$ takes the form $\exp (a e_{1})$, where $a = \gamma_{T} - \gamma_{0}$, and the reduced path with signature $X$ is simply the straight line going from $0$ to $a$. The inversion problem is then interesting only when $d \geq 2$, where evolutions of the path are in general non-commutative. 
\end{rmk}

The key to the reconstruction problem in $d \geq 2$ is then to recover these non-commutative evolutions of the path in their correct orders based on the signature. To appreciate the difficulty of the problem, we note from Definition \ref{de:signature} that each term $C_{\gamma}(w)$ represents some global effect of the path $\gamma$, and a priori there is no obvious way of recovering local information of the path from these global representations. 

A naive approach to the reconstruction problem can be to try to reproduce the path from the interpretation of each term in the signature, as the explanation of the meanings of $\{C_{\gamma}(w): |w| \leq 2\}$ above. However, these intuitive interpretations break down when $|w|$ gets large, and it is essentially impossible to proceed this way to recover much finer information of the path beyond the increment and area. Thus, certain operations on signatures that can reveal local information of the path will be necessary for the reconstruction. 

There have been recent attempts to the reconstruction. In establishing the uniqueness of signature for Brownian motion sample paths, Le Jan and Qian (\cite{Brownsig_uniqueness}) constructed polygonal approximations to the Brownian paths by using the information in their signatures only. This approximation scheme has been further extended to diffusions (\cite{Diffusion_sig}) and a large class of Gaussian processes (\cite{Gaussian_sig}). These works all used a fixed approximation scheme that relies on the law of the underlying random paths which gives certain almost sure non-degeneracy property of the sample paths. However, this non-degeneracy property is not available for all deterministic rough paths, so one would not expect any fixed approximation scheme to work in the deterministic setting without further assumptions on the paths. This obstacle was overcome by Geng in \cite{rp_inversion}, where he allowed flexibility in the scheme used depending on the signature of the path in consideration, and gave a construction of any deterministic geometric rough path from its signature. 

However, all the approximations schemes used in those works are indirect -- they require a sophisticated transform of the signature via smooth one-forms which involves various limiting processes. In addition, even for approximation at any fixed scale and in the setting of smooth paths, these constructions require the use of the \textit{whole signature sequence} rather than any truncation of it. Thus, it is hard to turn them into an effective algorithm. 

In the recent article \cite{inversion_hyperbolic}, by using a construction from hyperbolic geometry, the authors gave an explicit inversion scheme together with stability properties for piecewise linear paths. But this scheme makes an essential use of the piecewise linearity of the path, and it is not clear how it can be extended to more general situations.

\subsection{Main result and strategy}

The main goal of this article is to develop an explicit and implementable procedure that enables one to reconstruct any $\cC^{1}$ path (at natural parametrization) from its signature, together with detailed stability estimates of the procedure. 

As mentioned in the previous subsection (Remark \ref{rm:problem}), the key to establish an effective reconstruction algorithm is to recover the non-commutative evolution of the path in its correct order. For example, the following three paths in $\RR^{2}$ have the same increments. Their signatures agree on level $|w|=1$, but start to differ when $|w| \geq 2$: 
\begin{align*}
\begin{tikzpicture}[scale=0.7,baseline=-0.0cm]
\node at (-2,0) [dot] (left) {}; 
\node at (0,0) [dot] (middle) {}; 
\node at (0,2) [dot] (above) {}; 
\draw[kernel] (left) to (middle); 
\draw[kernel] (middle) to (above); 
\end{tikzpicture}
\qquad \qquad
\begin{tikzpicture}[scale=0.7,baseline=-0.0cm]
\node at (-2,0) [dot] (left) {}; 
\node at (-2,2) [dot] (middle) {}; 
\node at (0,2) [dot] (above) {}; 
\draw[kernel] (left) to (middle); 
\draw[kernel] (middle) to (above); 
\end{tikzpicture}
\qquad \qquad
\begin{tikzpicture}[scale=0.7,baseline=-0.0cm]
\node at (-2,0) [dot] (left) {}; 
\node at (0,2) [dot] (above) {}; 
\draw[kernel] (left) to (above); 
\end{tikzpicture}. 
\end{align*}
Let $x$ and $y$ denote the standard basis of $\RR^{2}$, then the signatures of the above three paths are the formal series $e^{x} e^{y}$, $e^{y} e^{x}$ and $e^{x+y}$, respectively. 

As one can see, the order of the evolution of the paths are captured in the signatures through the ordered letters that consist the words $w$'s (see Definition \ref{de:signature} above). Our main result is that, by symmetrizing the signatures at high level ($|w|$ large), we can average out the non-commutativities of the path at small scales but still preserve the order of its evolution at larger scales. Thus, for any given $k$, we produce from the \textit{truncated signature} a piecewise linear path that approximates the original path at scale $\frac{1}{k}$, and the error of the approximation can be explicitly quantified in terms of the modulus of continuity of the derivative of the path. The main theorem could be loosely stated as follows. 

\begin{thm} \label{th:main_loose}
	Let $\gamma$ be a $\cC^{1}$ path in $\RR^{d}$ at natural parametrization, and assume we know its signature $X = \{C_{\gamma}(w): |w| \geq 0\}$. For large enough $k$ (depending on the dimension and the path, but will be quantified explicitly below), by using the terms in the truncated signature up to level $|w| = 2d k^{3} \log k + k$, we construct a piecewise linear path $\tilde{\gamma}$ with $k$ linear pieces such that when both $\gamma$ and $\tilde{\gamma}$ are at natural parametrization with respect to $\ell^{1}$ norm, for every $t \in [0,1]$ where $\dot{\tilde{\gamma}}_{t}$ is defined, we have
	\begin{equation} \label{eq:main_loose}
	|\dot{\gamma}_{t} - \dot{\tilde{\gamma}}_{t}| < C \eta_{k}, 
	\end{equation}
	where $C > 0$ depends on the path $\gamma$ only, and $\eta_{k} \rightarrow 0$ as $k \rightarrow +\infty$ with rate depending on the modulus of continuity of $\dot{\gamma}$. 
\end{thm}

\begin{rmk}
	A precise statement of this main result, including the value of the constant $C$, can be found in Theorem \ref{th:main} for $d=2$ and in Theorem \ref{th:multi} for higher dimensions. The definition of $\eta_{k}$ is in \eqref{eq:eta} for $d=2$ and in \eqref{eq:eta_multi} for general $d$. As for the decay of $\eta_{k}$, if $\gamma \in \cC^{1,\alpha}$ for some $\alpha \in (0,1]$, then $\eta_{k} = \oO (k^{-\frac{\alpha^{2}}{2}})$ (see \eqref{eq:eta} and Remark \ref{rm:eta} for more details about the rate). But we expect that both this error bound and the level of truncation $|w| \sim k^{3} \log k$ in the signature can be improved. 
\end{rmk}

\begin{rmk}
	Although many arguments below made an explicit use of the continuity of $\dot{\gamma}$, we expect most of them still hold (with a different proportionality constant) if $\dot{\gamma}$ is piecewise continuous, and its discontinuities have jumps less than $\pi$. The only statement that essentially relies on the continuity of $\dot{\gamma}$ is the existence of the word $w^{*}$ with certain non-degeneracy properties (in Theorem \ref{th:concentration}). This word is used in the symmetrization procedure (see Section \ref{sec:symmetrization}) mainly for technical convenience -- one would not have the closed form expression \eqref{eq:expression_symmetrized_sig} for symmetrized signatures without the use of the word to separate blocks in symmetrization. It will be very interesting, and also convenient for practice, if one could prove the validity and stability of the symmetrization procedure without using this word to separate blocks. This would give an inversion scheme for piecewise $\cC^{1}$ paths and possibly all finite length paths. 
	
	On the other hand, however, our procedure does make an essential use of the finite length nature of $\gamma$. Any extension of the procedure to rough path together with detailed quantitative characterization of its stability would require new ingredients. 
\end{rmk}

We now briefly explain the main steps in the construction of such a piecewise linear path. Since there is no essential difference between dimension $2$ and higher, we will mainly focus on the $2$-dimensional case, and give a description of the procedure in higher dimensions with a brief explanation in Section \ref{sec:higher_dimensions}. 

Each line segment in a piecewise linear path is determined by its direction (a unit vector in $\RR^{2}$) and length. Thus, for each integer $k$ and each $j = 1, \dots, k$, our aim is to find a $2$-dimensional unit vector (in $\ell^{1}$ norm)
\begin{align*}
\theta_{j} = \big( a_{j}^{x} \rho_{j}, a_{j}^{y} (1-\rho_{j}) \big), 
\end{align*}
where $\rho_{j} \in [0,1]$ and $a_{j}^{x}, a_{j}^{y} \in \{\pm 1\}$, and $\tilde{L} \in \RR^{+}$ such that
\begin{align*}
\sup_{1 \leq j \leq k} \sup_{u \in [\frac{j-1}{k}, \frac{j}{k}]} |\dot{\gamma}_{u} - \tilde{L} \theta_{j}| < C \eta_{k}. 
\end{align*}
Thus, if we let $\tilde{\gamma}$ to be the path that are concatenations of the line segments $\frac{\tilde{L}}{k} \theta_{j}$'s, this will automatically imply that $\tilde{\gamma}$ is close to the original path $\gamma$ in the Lipschitz norm. 

It is clear that the parameter $\rho_{j} \in [0,1]$ represents the \textit{unsigned direction} of the $j$-th line segment, $a_{j}^{x}, a_{j}^{y}$ represent the signs of $x$ and $y$ directions (in $\RR^{2}$) in that segment, and $\tilde{L}$ is an approximation to the $\ell^{1}$ length of the original path. 

In order to get a rough idea how these parameters can be obtained from the signature of $\gamma$, we first briefly recall from \cite{inversion_hyperbolic} the inversion scheme for integer lattice paths. This can be decomposed into two steps: 

\begin{enumerate}
	\item Identify the unique longest square free word\footnote{A word $w = e_{i_1} \cdots e_{i_n}$ is square free if for all $j = 1, \cdots, n-1$, we have $i_{j} \neq i_{j+1}$. } $w$ such that $C(w) \neq 0$. The order of the letters in $w$ gives the directions (up to the sign) of each piece of the lattice path. 
	\item Move one level up in the signature to recover the sign as well as the length of each piece. 
\end{enumerate}

The reconstruction scheme developed in this article is close in spirit to that for the lattice paths. In what follows, we will recover these parameters in the order of $\rho_{j}$, $a_{j}^{x}$, $a_{j}^{y}$ and then finally $\tilde{L}$. Among these parameters, the most difficult one to recover is the unsigned direction $\rho_{j}$. Ideally, one would like to implement the strategy for integer lattice paths to recover these $\rho_{j}$'s. However, at first glance, that procedure seems to crucially depend on the very special structure of the integer lattice, and does not generalize directly to other situations. In particular, the vanishing/non-vanishing property of coefficients of square free words does not carry over to more general cases where the path can move along any direction in the plane. In order to recover the directions that are not necessarily parallel to the Euclidean axes, we use a symmetrization procedure together with a more robust notion of non-degeneracy that replaces the strict non-zero criterion in Step 1 above. 

Once the directions $\rho_{j}$'s are recovered, one can (similar to the lattice path case) move one level up in the signature to recover the signs $a_{j}^{x}$ and $a_{j}^{y}$. Finally, the approximated length $\tilde{L}$ is obtained by a simple scaling argument. 

In the special case of monotone paths, a much simplified version of the strategy sketched above can be applied to give effective and efficient recovery of the path. This has been carried out in \cite{inversion_monotone}.

\subsection{Assumptions and notations} \label{sec:notations}

We now summarize the assumptions on our path $\gamma$ as well as the notations we will be using in the article. We will mainly work in dimension $d=2$. We equip $\RR^{2}$ with the $\ell^{1}$ norm, and let $x$ and $y$ denote the standard basis elements of $\RR^{2}$. 

The notation $|\cdot|$ will have different meanings in various contexts. If $\alpha \in \RR$, then $|\alpha|$ is the \textit{absolute value} of $\alpha$. If $\alpha = (\alpha_{1}, \alpha_{2}) \in \RR^{2}$ is a vector, then $|\alpha| = |\alpha_{1}| + |\alpha_{2}|$ is its $\ell^{1}$ norm. Finally, if $w$ is a word, then $|w|$ denote the length of $w$. 

Throughout, we fix our path $\gamma$ that is $\cC^{1}$ under \textit{natural parametrization} and has length $L$. More precisely, the path
\begin{align*}
\gamma: [0,1] \rightarrow \RR^{2}, \qquad \gamma_{u} = (x_{u}, y_{u})
\end{align*}
has continuous derivative $\dot{\gamma}_{u} = (\dot{x}_{u}, \dot{y}_{u})$ on $[0,1]$, and satisfies $|\dot{x}_{u}| + |\dot{y}_{u}| \equiv L$ for all $u \in [0,1]$. We let $\delta$ denote the modulus of continuity of $\dot{\gamma}$, so
\begin{equation} \label{eq:modulus}
\delta (\epsilon) := \sup_{|s-t| < \epsilon} |\dot{\gamma}_{s} - \dot{\gamma}_{t}|. 
\end{equation}
For each integer $k$, we define $\epsilon_{k}$ and $\eta_{k}$ to be
\begin{equation} \label{eq:eta}
\epsilon_{k} := \sqrt{2} \bigg( \sqrt{\frac{\delta(1/k)}{L}} + \frac{1}{\sqrt{k}} \bigg), \qquad \eta_{k} := \delta(3 \epsilon_{k}) + \frac{L}{\sqrt{k}}. 
\end{equation}
The definitions \eqref{eq:eta} are for the two dimensional case; the case for general dimension $d$ are introduced in \eqref{eq:eta_multi}. For any integer $k$, let $\Delta_{k-1}$ be the standard simplex
\begin{equation} \label{eq:simplex}
\Delta_{k-1} := \big\{ (u_{1}, \dots, u_{k-1}): 0 = u_{0} < u_{1} < \cdots < u_{k-1} < u_{k} = 1 \big\}. 
\end{equation}
In situations where there might be a confusion, We use boldface letters to denote vectors and normal letters for their components; for example we write
\begin{align*}
\uu = (u_{1}, \dots, u_{k-1}). 
\end{align*}
On the other hand, we write the path as $\gamma = (\gamma^{1}, \gamma^{2})$ as there could be no confusion to arise in this case. 

Given the path $\gamma$ and $\uu \in \Delta_{k-1}$, for every $j = 1, \dots, k$, we let
\begin{equation} \label{eq:increments_direction}
\Delta_{\uu,j} x := x_{u_{j}} - x_{u_{j-1}}, \qquad \Delta_{\uu,j} y := y_{u_{j}} - y_{u_{j-1}}
\end{equation}
denote the increments in the relevant directions in the time interval $[u_{j-1}, u_{j}]$, and
\begin{equation} \label{eq:increments_total}
|\Delta_{\uu,j} \gamma| := |\Delta_{\uu,j} x | + |\Delta_{\uu,j} y|
\end{equation}
be the magnitude of total increments. Similarly, we denote the increments of the $j$-th piece under standard subdivision $\s_{k} = \big\{ \frac{j}{k} \big\}_{j=0}^{k}$ by
\begin{equation} \label{eq:increments_standard}
\Delta_{j} x = x_{j/k} - x_{(j-1)/k}, \qquad \Delta_{j} y = y_{j/k} - y_{(j-1)/k}, 
\end{equation}
and let $|\Delta_{j} \gamma| = |\Delta_{j} x| + |\Delta_{j} y|$. 

\subsection{Organization of the article}

This article is organized as follows. In Section \ref{sec:symmetrization}, we introduce and set up the symmetrization procedure on signatures. Section \ref{sec:concentration} is devoted to the proof of a concentration property of the symmetrized signatures with explicit quantitative estimates. These estimates will then be used in Section \ref{sec:reconstruction} where we give a detailed description of the reconstruction procedure of the path from the signatures. Finally, in Section \ref{sec:higher_dimensions}, we give an outline of the reconstruction procedure for paths in higher dimensions.

\begin{flushleft}
\textbf{Acknowledgements}
\end{flushleft}
We thank two referees for carefully reading the manuscript and providing helpful suggestions. Weijun Xu also thanks Horatio Boedihardjo for many helpful discussions. 

The research of Terry Lyons is supported by EPSRC grant EP/H000100/1 and the European Research Council under the European Union’s Seventh Framework Program (FP7-IDEAS-ERC) / ERC grant agreement nr. 291244. Terry Lyons acknowledges the support of the Oxford-Man Institute. Weijun Xu has been supported by the Oxford-Man Institute through a scholarship during his time as a student at Oxford. He is now supported by Leverhulme trust.

\section{Symmetrization} \label{sec:symmetrization}

As mentioned in the introduction, the first step in the reconstruction is to obtain the unsigned direction $\rho_{j} \in [0,1]$ of the $j$-th line segment in our piecewise linear path approximation, and the key to the accurate recovery of this parameter is the use of symmetrization procedure together with a robust notion of non-degeneracy. We first give a simple example to illustrate this.

\begin{eg} \label{eg:increment}
Let $X$ be the signature of some bounded variation path $\gamma_{t} = (x_{t},y_{t}), t \in [0,1]$, and we would like to recover from $X$ the increments
\begin{align*}
\Delta x:= x_{1} - x_{0} \qquad \text{and} \qquad \Delta y := y_{1} - y_{0}, 
\end{align*}
where $(x_{0},y_{0})$ and $(x_{1},y_{1})$ are beginning and end points of $\gamma$. One could of course get the exact values of the pair $(\Delta x, \Delta y)$ directly from the first level signature $X^{1}$, but cannot proceed much further. The symmetrization method given below is more complicated, but has the advantage that it can be generalized to recover much more information beyond the increments. 

The symmetrization procedure to recover the increments is as follows. For every integer $n$ and every $\ell = 0, 1, \dots, n$, let
\begin{align*}
\sS^{n}(\ell) := n! \sum C(w), 
\end{align*}
where the sum is taken over all words $w$ with length $n$ that contain $\ell$ $x$'s and $(n-\ell)$ $y$'s. Then, $\sS^{n}(\ell)$ has the expression
\begin{align*}
\sS^{n}(\ell) = \begin{pmatrix} n \\ \ell \end{pmatrix} (\Delta x)^{\ell} (\Delta y)^{n-\ell}. 
\end{align*}
Note that for each $n$ and $\ell$, the left hand side above is the information available to us (from the signature), and the right hand side is its expression. It is standard that for fixed large $n$, the quantity $|\sS^{n}(\ell)|$ is maximized near the value $\ell^{*}$ such that
\begin{align*}
\frac{\ell^{*}}{n - \ell^{*}} \approx \frac{|\Delta x|}{|\Delta y|}. 
\end{align*}
We can then asymptotically recover the ratio $|\Delta x| : |\Delta y|$ by finding the maximizer $\ell^{*}$ of $\sS^{n}(\ell)$, and this gives us the \textit{unsigned direction} of the increment. 

To recover the signs of $\Delta x$ and $\Delta y$, one repeats the same trick as in the case for integer lattice paths: moving one level up and comparing the signs. Finally, the magnitude of the increment will be obtained via scaling. 
\end{eg}

The example above illustrates the case for the single-piece approximation to the path (that is, $k=1$). In order to recover finer information of the path (for large $k$), instead of symmetrizing the whole signature, we divide high level signatures into $k$ equal blocks, and symmetrize each block. We first introduce some notations before we set up the procedure. 

For every integer $k$ and $n$, we let $\lL^{n}_{k}$ denote the set of multi-indices
\begin{equation} \label{eq:index_set}
\lL^{n}_{k} := \bigg\{ \Bell = (\ell_{1}, \dots, \ell_{k}): 0 \leq \ell_{j} \leq n \bigg\}. 
\end{equation}
For every word $w$, we let $|w|_{x}$ and $|w|_{y}$ denote the number of letters $x$ and $y$ in $w$, respectively. For every $w$ of length $k-1$ of the form $w = e_{i_{1}} \cdots e_{i_{k-1}}$ and every multi-index $\Bell \in \lL^{n}_{k}$, we let $\wW^{2n}_{k}(w, \Bell)$ be the set of words
\begin{equation} \label{eq:blocks_word}
\wW^{2n}_{k}(w, \Bell) = \bigg\{w' = w_{1} * e_{i_{1}} * \cdots * e_{i_{k-1}} * w_{k}: \phantom{1} |w_{j}|_{x} = 2 \ell_{j}, |w_{j}|_{y} = 2n - 2 \ell_{j} \bigg\}. 
\end{equation}
A typical word $w' \in \wW^{2n}_{k}(w, \Bell)$ where $w = e_{i_{1}} \dots e_{i_{k-1}}$ has the form 
\begin{align*}
\underbrace{*****}_{w_{1}} \phantom{1} e_{i_{1}} \phantom{1} \underbrace{*****}_{w_{2}} \phantom{1} e_{i_{2}} \dots \dots e_{i_{k-2}} \underbrace{*****}_{w_{k-1}} \phantom{1} e_{i_{k-1}} \phantom{1} \underbrace{*****}_{w_{k}}. 
\end{align*}
Here, each $w_{j}$ is a sub-word of length $2n$ with $2 \ell_{j}$ letters $x$ and $2n - 2\ell_{j}$ letters $y$. The two consecutive sub-words (blocks) $w_{j}$ and $w_{j+1}$ are separated by the letter $e_{i_{j}}$ from $w$. For example, for $n=2$ and $k=1$, $\lL^{2}_{1} = \{0,1,2\}$, so we have
\begin{align*}
&\wW^{4}_{1}(\emptyset, (0)) = \big\{ yyyy \big\}, \qquad \wW^{4}_{1}(\emptyset, (2)) = \big\{ xxxx \big\}, \\
&\wW^{4}_{1}(\emptyset, (1)) = \big\{ xxyy, xyxy, xyyx, yxxy, yxyx, yyxx \big\}. 
\end{align*}
For $n=k=2$, the set $\wW^{4}_{2}(x, (1,0))$ consists of the words
\begin{align*}
\big\{ xxyyxyyyy, xyxyxyyyy, xyyxxyyyy, yxxyxyyyy, yxyxxyyyy, yyxxxyyyy \big\}. 
\end{align*}
The set $\wW^{4}_{2}(x, (1,2))$ is similar except that one replaces the last four $y$'s by four $x$'s. With this definition, we introduce the \textit{symmetrized signatures}
\begin{equation} \label{eq:symmetrized_sig}
\sS^{2n}_{k}(w, \Bell) := ((2n)!)^{k} \sum_{w' \in \wW^{2n}_{k}(w, \Bell)} C(w'). 
\end{equation}
Recall the definitions of $\Delta_{k-1}$, $\Delta_{\uu,j} x$ and $\Delta_{\uu,j} y$ in \eqref{eq:simplex} and \eqref{eq:increments_direction}, we have the following proposition. 

\begin{prop} \label{pr:expression_symmetrized_sig}
Fix integer $k$ and $n$. Let $w = e_{i_{1}} \cdots e_{i_{k-1}}$ and $\Bell = \{ \ell_{1}, \dots, \ell_{k} \} \in \lL^{n}_{k}$. Then, the quantity $\sS^{2n}_{k}(w, \Bell)$ defined above has the expression
\begin{equation} \label{eq:expression_symmetrized_sig}
\sS^{2n}_{k}(w, \Bell) = \int_{\Delta_{k-1}} \prod_{j=1}^{k-1} \dot{\gamma}_{u_{j}}^{i_{j}} \prod_{j=1}^{k} \begin{pmatrix} 2n \\ 2\ell_{j} \end{pmatrix} (\Delta_{\uu,j} x)^{2 \ell_{j}} (\Delta_{\uu,j} y)^{2n - 2\ell_{j}} d\uu. 
\end{equation}
\end{prop}
\begin{proof}
	By the definition of $\wW^{2n}_{k}(w, \Bell)$ in \eqref{eq:blocks_word}, we have
	\begin{align*}
	&\phantom{111}\sum_{w' \in \wW^{2n}_{k}(w, \Bell)} C(w')\\
	&= \int_{\uu \in \Delta_{k-1}} \prod_{j=1}^{k} \bigg( \frac{1}{(2 \ell_{j})! (2n-2\ell_{j})!} \int_{u_{j-1} < v_{1}^{j}, \dots, v_{2n}^{j} < u_{j}} d \gamma_{v_{1}^{j}}^{i_{1}^{j}} \cdots d\gamma_{v_{2n}^{j}}^{i_{2n}^{j}} \bigg) d \gamma^{i_{1}}_{u_{1}} \cdots d \gamma^{i_{k-1}}_{u_{k-1}}, 
	\end{align*}
	where $\uu = (u_{1}, \dots, u_{k-1})$, $u_{0} = 0$, $u_{k} = 1$, and for each $j$, the word $w_{j} = e_{i_{1}^{j}} \cdots e_{i_{2n}^{j}}$ consists of $2 \ell_{j}$ letters $x$ and $2n-2\ell_{j}$ letters $y$. Thus, we have
	\begin{align*}
	\int_{u_{j-1} < v_{1}^{j}, \dots, v_{2n}^{j} < u_{j}} d \gamma_{v_{1}^{j}}^{i_{1}^{j}} \cdots d\gamma_{v_{2n}^{j}}^{i_{2n}^{j}} = \big( \Delta_{\uu,j} x \big)^{2 \ell_{j}} \big( \Delta_{\uu,j} y \big)^{2n - 2 \ell_{j}}, 
	\end{align*}
	and \eqref{eq:expression_symmetrized_sig} follows immediately. 
\end{proof}

We will see below that these $\sS^{2n}_{k}(w, \Bell)$'s are the only quantities we will use to recover the unsigned directions of each piece in our piecewise linear approximation. It is the recovery of the sign of each direction that requires more information in the signature other than the $\sS^{2n}_{k}$'s. We will introduce those additional quantities only when it becomes necessary.

\begin{rmk}
We emphasize that \eqref{eq:symmetrized_sig} is the definition of the symmetrized signature; this information is available to us from the signature $X$. On the other hand, \eqref{eq:expression_symmetrized_sig} is an expression of this quantity, and we will make use of this expression later to prove a priori bounds of the symmetrized signature. 
\end{rmk}

\begin{rmk}
The reason why we insert a letter $e_{i_{j}}$ between every two consecutive symmetrized blocks is to let $\sS^{2n}_{k}(w, \Bell)$ have a closed form expression as in \eqref{eq:expression_symmetrized_sig}. This is mainly for technical convenience, and we expect results in the next section still hold true when the symmetrization is taken without using these $e_{i_{j}}$'s to separate blocks. Also, the symmetrization is taken only over even numbers of $x$'s and $y$'s in each block. This is to avoid cancellations of different signs inside the integration on the right hand side of \eqref{eq:expression_symmetrized_sig}. 
\end{rmk}

Before we proceed, we first give a heuristic explanation how the quantities $\sS^{2n}_{k}(w, \Bell)$'s can help recover the unsigned directions of each segment of the path. For fixed word $w = e_{i_{1}} \cdots e_{i_{k-1}}$, using Proposition \ref{pr:expression_symmetrized_sig} and then summing over $\Bell \in \lL^{n}_{k}$, we have
\begin{equation} \label{eq:sum_symmetrization}
\sum_{\Bell \in \lL^{n}_{k}} \sS^{2n}_{k}(w, \Bell) = \frac{1}{2^{k}} \int_{\Delta_{k-1}} \prod_{j=1}^{k-1} \dot{\gamma}_{u_{j}}^{i_{j}} \prod_{j=1}^{k} \big( (\Delta_{\uu,j} x + \Delta_{\uu,j} y)^{2n} + (\Delta_{\uu,j} x - \Delta_{\uu,j} y)^{2n} \big) d \uu. 
\end{equation}
If $n \gg k$, it is natural to expect that the above integrand has magnitude of order $C_{k} \prod_{j=1}^{k} |\Delta_{\uu,j} \gamma|^{2n}$, where $|\Delta_{\uu,j} \gamma| = |\Delta_{\uu,j} x| + |\Delta_{\uu,j} y|$ is defined in \eqref{eq:increments_total}. This suggests
\begin{equation} \label{eq:maximal_integral}
\sum_{w: |w|=k-1} \sum_{\Bell \in \lL^{n}_{k}} \sS^{2n}_{k}(w, \Bell) \sim C_{k} \int_{\Delta_{k-1}} \prod_{j=1}^{k} |\Delta_{\uu,j}  \gamma|^{2n} d \uu. 
\end{equation}
By the concentration properties of the integral $\int_{\Delta_{k-1}} \prod_{j} |\Delta_{\uu,j} \gamma|^{2n} d \uu$ (which will be proven and quantified in the next Section) as well as the normal approximation to binomials, it turns out that as long as we sum the $\sS^{2n}_{k}(w, \Bell)$'s over a very small range of $\Bell \in \lL^{n}_{k}$ such that
\begin{equation} \label{eq:range_sum_direction}
\frac{\ell_{j}}{n-\ell_{j}} \sim \frac{|\Delta_{j} x|}{|\Delta_{j} y|}, \qquad j = 1, \dots, k, 
\end{equation}
then the sum will also be ``close" to its possible maximum -- the right hand side of \eqref{eq:maximal_integral}. On the other hand, if any $\ell_{j}$ is away from the range in \eqref{eq:range_sum_direction}, then the sum of $\sS^{2n}_{k}(w, \Bell)$'s will be negligible compared to \eqref{eq:maximal_integral}. Then, similar as Example \ref{eg:increment}, ``observing" which range of $\ell_{j}$ maximizes the sum of $\sS^{2n}_{k}(w, \Bell)$'s will recover asymptotically the unsigned directions of each piece. We will precisely formulate and prove this heuristic in the next sections.

\section{Concentration of symmetrized signatures} \label{sec:concentration}

The aim of this section is to prove a quantitative statement about the concentration property of the integral
\begin{equation} \label{eq:simple_integral}
\int_{\Delta_{k-1}} \prod_{j=1}^{k} |\Delta_{\uu,j} \gamma|^{n} d \uu
\end{equation}
when $n \gg k$, where $|\Delta_{\uu,j} \gamma|$ is the magnitude of the increment of $\gamma$ in $[u_{j-1}, u_{j}]$ as defined in \eqref{eq:increments_total}. This concentration property roughly states that although the integration is taken over the whole simplex, when $n$ is large, almost all its contribution comes from a very small subset of $\Delta_{k-1}$. In fact, the domain of concentration is around the points $\uu \in \Delta_{k-1}$ such that the product $\prod_{j} |\Delta_{\uu,j} \gamma|$ is maximized, and these maximizers cannot be far away from the standard dissection $\s_{k} = \{\frac{j}{k}\}_{j=1}^{k-1}$. A quantitative statement will be given in Proposition \ref{pr:concentration} below. As a consequence, we will obtain in Theorem \ref{th:concentration} a precise quantitative characterization of the ``heuristic" \eqref{eq:maximal_integral}. Throughout, we assume $\gamma: [0,1] \rightarrow \RR^{2}$ is at natural parametrization with respect to $\ell^{1}$ norm, $\gamma \in \cC^{1}$, and $\delta$ is the modulus of continuity of $\dot{\gamma}$.

\begin{lem} \label{le:standard_location}
Let $k$ be an integer such that $\delta(\frac{1}{k}) < \frac{L}{2}$, where $L$ is the $\ell^{1}$ length of $\gamma$. Then, for every $j = 1, \dots, k$, we have
\begin{align*}
\frac{L - \delta(\frac{1}{k})}{k} \leq |\Delta_{j} \gamma| \leq \frac{L}{k}, 
\end{align*}
where $|\Delta_{j} \gamma| = |\Delta_{j} x| + |\Delta_{j} y|$ is the magnitude of the increment of $\gamma$ in $[\frac{j-1}{k}, \frac{j}{k}]$, as defined in \eqref{eq:increments_standard}. 
\end{lem}
\begin{proof}
The inequality $|\Delta_{j} \gamma| \leq \frac{L}{k}$ follows immediately from the assumption that $\gamma$ is at natural parametrization. 

For the lower bound, for each $j = 1, \dots, k$, we let $I_{j} = [\frac{j-1}{k}, \frac{j}{k}]$. If both $\dot{x}_{u}$ and $\dot{y}_{u}$ keep their signs unchanged in the interval $I_{j}$, then we have
\begin{align*}
\bigg| \int_{I_{j}} \dot{x}_{u} du \bigg| = \int_{I_{j}} |\dot{x}_{u}| du, \qquad \bigg| \int_{I_{j}} \dot{y}_{u} du \bigg| = \int_{I_{j}} |\dot{y}_{u}| du, 
\end{align*}
and it follows immediately that $|\Delta_{j} \gamma| = \frac{L}{k}$. If not, then either $\dot{x}$ or $\dot{y}$ is $0$ at some point $u$ in the interval $I_{j}$. Suppose without loss of generality that $\dot{y}_{u} = 0$ for some $u \in I_{j}$, then we have
\begin{align*}
\sup_{u \in I_{j}} |\dot{y}_{u}| \leq \delta \big(\frac{1}{k}\big), 
\end{align*}
which in turn gives
\begin{align*}
|\dot{x}_{u}| \geq L - \delta \big(\frac{1}{k}\big) \geq \frac{L}{2}. 
\end{align*}
for all $u \in I_{j}$. In particular, the continuity of $\dot{x}_{u}$ implies that it does not change sign in $I_{j}$. Thus, we have
\begin{align*}
|\Delta_{j} \gamma| \geq \bigg| \int_{I_{j}} \dot{x}_{u} du \bigg| = \int_{I_{j}} |\dot{x}_{u}| du \geq \frac{L - \delta(\frac{1}{k})}{k}. 
\end{align*}
This finishes the proof of the lemma. 
\end{proof}

Lemma \ref{le:standard_location} says that at $\uu  = \s_{k} = \{ \frac{j}{k} \}$, the product $\prod_{j} |\Delta_{j} \gamma|$ is close to its largest possible value. On the other hand, if any $u_{j}$ is far away from $\frac{j}{k}$, then $\prod_{j} |\Delta_{\uu,j} \gamma|$ must be small. This is the content of the following lemma.

\begin{lem} \label{le:no_deviation}
Let $k$ be an integer such that $\delta(\frac{1}{k}) < \frac{L}{2}$, and let $\uu = \{ u_{1}  < \cdots < u_{k-1} \} \in \Delta_{k-1}$. Recall from \eqref{eq:eta} that
\begin{align*}
\epsilon_{k} = \sqrt{2} \bigg( \sqrt{\frac{\delta(\frac{1}{k})}{L}} + \frac{1}{\sqrt{k}} \bigg). 
\end{align*}
If $\big|u_{j} - \frac{j}{k} \big| \geq \epsilon_{k}$ for some $1 \leq j \leq k$, then
\begin{equation} \label{eq:no_deviation}
\prod_{i=1}^{k} \bigg( |\Delta_{\uu,i} \gamma| \big/ |\Delta_{i} \gamma| \bigg) < \frac{1}{e}. 
\end{equation}
\end{lem}
\begin{proof}
Since the left hand side of \eqref{eq:no_deviation} is invariant under rescaling of length, we can assume without loss of generality that $L = 1$. Suppose
\begin{align*}
u_{j} - \frac{j}{k} = \epsilon
\end{align*}
for some $j$ and some $\epsilon$. Then $u_{j} = \frac{j}{k} + \epsilon$, and the sum of all increments before and after the time $t = u_{j}$ satisfy
\begin{align} \label{eq:piece_bound}
\sum_{i=1}^{j} |\Delta_{\uu,i} \gamma| \leq \frac{j}{k} + \epsilon, \qquad \sum_{i=j+1}^{k} |\Delta_{\uu,i} \gamma| \leq \frac{k-j}{k} - \epsilon. 
\end{align}
Note that here we do not require $\epsilon$ to be positive. By the bound \eqref{eq:piece_bound}, the best possible maximum one can hope for $\prod_{i} |\Delta_{\uu,i} \gamma|$ is the case when we have
\begin{align*}
|\Delta_{\uu,i} \gamma| = \frac{1}{k} + \frac{\epsilon}{j}, \phantom{11} \forall i \leq j \qquad \text{and} \qquad |\Delta_{\uu,i} \gamma| = \frac{1}{k} - \frac{\epsilon}{k-j}, \phantom{11} \forall i \geq j+1, 
\end{align*}
which gives
\begin{align*}
\prod_{i=1}^{k} |\Delta_{\uu,i} \gamma| \leq \bigg( \frac{1}{k} + \frac{\epsilon}{j} \bigg)^{j} \bigg( \frac{1}{k} - \frac{\epsilon}{k-j} \bigg)^{k-j}. 
\end{align*}
Since $\delta(\frac{1}{k}) < \frac{L}{2}$, we can apply Lemma \ref{le:standard_location} to get
\begin{equation} \label{eq:product_ratio}
\prod_{i=1}^{k} \bigg(|\Delta_{\uu,i} \gamma| \big/ |\Delta_{i} \gamma| \bigg) \leq \bigg( \frac{(1 + p \epsilon)^{\frac{1}{p}} (1 - q \epsilon)^{\frac{1}{q}}}{1 - \delta(\frac{1}{k})} \bigg)^{k}, 
\end{equation}
where $p = \frac{k}{j}$ and $q = \frac{k}{k-j}$. Now, let
\begin{align*}
f(x) = (1 + px)^{\frac{1}{p}} (1 - qx)^{\frac{1}{q}}, 
\end{align*}
then $f$ is defined on the interval $[-\frac{1}{p}, \frac{1}{q}] = [-\frac{j}{k}, 1 - \frac{j}{k}]$, and has derivative
\begin{align*}
f'(x) = - (p+q) (1+px)^{-\frac{1}{q}} (1-qx)^{-\frac{1}{p}} x. 
\end{align*}
For $x \in [0,\frac{1}{q}]$, $f'(x)$ is negative and satisfies
\begin{align*}
|f'(x)| \geq (p+q) \big(1 + \frac{p}{q} \big)^{-\frac{1}{q}} x = (p+q)^{\frac{1}{p}} q^{\frac{1}{q}} x \geq x. 
\end{align*}
Similarly, for $x \in [-\frac{1}{p}, 0]$, $f'(x)$ is positive and satisfies $f'(x) \geq |x|$. Noting that $f(0) = 1$, we then deduce
\begin{align*}
f(x) \leq 1 - \frac{x^{2}}{2}, \qquad x \in [-\frac{1}{p}, \frac{1}{q}]. 
\end{align*}
Plugging this bound into \eqref{eq:product_ratio} with $x=\epsilon$ (noting that $\epsilon$ could be negative), we get
\begin{align*}
\prod_{i=1}^{k} \bigg(|\Delta_{\uu,i} \gamma| \big/ |\Delta_{i} \gamma| \bigg) \leq \bigg( \frac{1 - \frac{\epsilon^{2}}{2}}{1 - \delta(\frac{1}{k})} \bigg)^{k}. 
\end{align*}
It is then clear that if $\frac{\epsilon^{2}}{2} > \delta(\frac{1}{k}) + \frac{1}{k}$, we will necessarily have
\begin{align*}
\prod_{i=1}^{k} \bigg(|\Delta_{\uu,i} \gamma| \big/ |\Delta_{i} \gamma| \bigg) \leq \bigg( 1 - \frac{1}{k} \bigg)^{k} < \frac{1}{e}. 
\end{align*}
The case for general length $L$ is essentially the same except one replaces $\delta(\frac{1}{k})$ by $\delta(\frac{1}{k}) / L$. 
\end{proof}

In view of Lemma \ref{le:no_deviation}, we let $E_{k-1}$ be the set
\begin{equation} \label{eq:concentration_set}
E_{k-1} = \bigg\{ (u_{1}, \cdots, u_{k-1}): \big|u_{j} - \frac{j}{k}\big| < \epsilon_{k}, j = 1, \cdots ,k-1 \bigg\}. 
\end{equation}
We can now prove the following concentration property.

\begin{prop} \label{pr:concentration}
For every integer $k$ such that $\delta(\frac{1}{k}) < \frac{L}{2}$ and every integer $n$, we have
\begin{equation} \label{eq:concentration}
\int_{\Delta_{k-1} \cap E_{k-1}} \prod_{j=1}^{k} |\Delta_{\uu,j} \gamma|^{n} d\uu \geq \big(1 - e^{3 k \log k - \frac{n}{2}} \big) \int_{\Delta_{k-1}} \prod_{j=1}^{k} |\Delta_{\uu,j} \gamma|^{n} d\uu. 
\end{equation}
\end{prop}
\begin{proof}
Let $k \geq K$, and let $\eE_{k-1}$ denote the set
\begin{align*}
\eE_{k-1} = \bigg\{ \vv = (v_{1}, \dots, v_{k-1}): \big| v_{j} - \frac{j}{k} \big| < \frac{1}{12 k^{2}}, \phantom{1} j = 1, \dots, k-1 \bigg\}. 
\end{align*}
Then, for any $\vv \in \eE_{k-1}$ and $j = 1, \dots, k$, we have
\begin{equation} \label{eq:increment_difference}
\big| |\Delta_{\vv,j} \gamma| - |\Delta_{j} \gamma| \big| \leq \big| \gamma_{v_{j}} - \gamma_{\frac{j}{k}} \big| + \big| \gamma_{v_{j-1}} - \gamma_{\frac{j-1}{k}} \big| < \frac{L}{6 k^{2}}. 
\end{equation}
Since $\delta(\frac{1}{k}) < \frac{L}{2}$, we have $L \leq 2 (L - \delta(\frac{1}{k}))$, so it follows from \eqref{eq:increment_difference} and then Lemma \ref{le:standard_location} that
\begin{align*}
|\Delta_{\vv,j} \gamma| \geq |\Delta_{j} \gamma| - \frac{L - \delta(\frac{1}{k})}{3 k^{2}} \geq \big( 1 - \frac{1}{3k} \big) |\Delta_{j} \gamma|. 
\end{align*}
Since $(1 - \frac{1}{3k})^{k}$ is increasing in $k$, multiplying over $j$ from $1$ to $k$, we get
\begin{equation} \label{eq:comparison_domains}
\prod_{j=1}^{k} |\Delta_{\vv,j} \gamma| \geq \frac{2}{3} \prod_{j=1}^{k} |\Delta_{j} \gamma| > e^{-\frac{1}{2}} \prod_{j=1}^{k} |\Delta_{j} \gamma|
\end{equation}
for all $\vv \in \eE_{k-1}$. Now, raising both sides of \eqref{eq:comparison_domains} to power $n$ and using Lemma \ref{le:no_deviation}, we get
\begin{align*}
\prod_{j=1}^{k} |\Delta_{\uu,j} \gamma|^{n} < e^{-\frac{n}{2}} \prod_{j=1}^{k} |\Delta_{\vv,j} \gamma|^{n}, \qquad \forall \phantom{1} \uu \in \Delta_{k-1} \cap E_{k-1}^{c}, \vv \in \eE_{k-1}. 
\end{align*}
Averaging both sides above in their respective domains, and noting $\eE_{k-1} \subset \Delta_{k-1}$, we deduce that
\begin{equation} \label{eq:bound_negligible}
\int_{\Delta_{k-1} \cap E_{k-1}^{c}} \prod_{j=1}^{k} |\Delta_{\uu,j} \gamma|^{n} d \uu < C_{k} e^{-\frac{n}{2}} \int_{\Delta_{k-1}} \prod_{j=1}^{k} |\Delta_{\uu,j} \gamma|^{n} d \uu, 
\end{equation}
where the constant $C_{k}$ is given by
\begin{align*}
C_{k} = \frac{|\Delta_{k-1} \cap E_{k-1}^{c}|}{|\eE_{k-1}|}, 
\end{align*}
and $|\cdot|$ denotes the volume of a set. Since $|\Delta_{k-1} \cap E_{k-1}^{c}| < |\Delta_{k-1}| = \frac{1}{(k-1)!}$ and $|\eE_{k-1}| = (6k^{2})^{-(k-1)}$, using the bound
\begin{align*}
\log (k!) > \int_{1}^{k} \log x dx = k \log k - (k-1)
\end{align*}
and the fact that $\big(1 + \frac{1}{k-1} \big)^{k-1} < e$ for all $k$, we obtain the estimate
\begin{align*}
C_{k} < (6ek)^{k-1}. 
\end{align*}
The conclusion of the theorem follows by plugging this estimate of $C_{k}$ into \eqref{eq:bound_negligible} and noting that $(k-1) \log (6ek) < 3 k \log k$. 
\end{proof}

Proposition \ref{pr:concentration} (and also Theorem \ref{th:concentration} below) will be useful when $n \gg k \log k$, in which case the integral $\int_{\Delta_{k-1}} \prod_{j=1}^{k} |\Delta_{\uu,j} \gamma|^{k}$ is concentrated on the sub-domain $\Delta_{k-1} \cap E_{k-1}$. As a consequence, the sum of the symmetrized signatures $\sS^{2n}(w, \Bell)$'s cannot be too far away from its maximal possible value as $n \rightarrow +\infty$. This is the content of the following theorem.

\begin{thm} \label{th:concentration}
For every $k$ such that $\delta(2 \epsilon_{k})  < \frac{L}{6}$, there exists a word $w^{*}$ of length $k-1$ such that for every $n$, we have
\begin{equation} \label{eq:non_degenerate}
\sum_{\Bell \in \lL^{n}_{k}} \big| \sS^{2n}_{k}(w^{*}, \Bell) \big| \geq \bigg( \frac{3}{6^{k}} - \big(1 + \frac{3}{6^{k}}\big) e^{3k \log k-n} \bigg) L^{k-1} \int_{\Delta_{k-1}} \prod_{j=1}^{k} |\Delta_{\uu,j} \gamma|^{2n} d\uu, 
\end{equation}
where $\lL^{n}_{k}$ is set of multi-indices as defined in \eqref{eq:index_set}. 
\end{thm}
\begin{proof}
For any word $w$ of length $k-1$, by \eqref{eq:expression_symmetrized_sig}, we have
\begin{equation} \label{eq:summing_indices}
\sum_{\Bell \in \lL^{n}_{k}} \big| \sS^{2n}_{k}(w, \Bell) \big| \geq \bigg| \int_{\Delta_{k-1}} \prod_{j=1}^{k-1} \dot{\gamma}_{u_{j}}^{i_{j}}  \prod_{j=1}^{k} \sum_{\ell_{j}=0}^{n} \begin{pmatrix} 2n \\ 2 \ell_{j}  \end{pmatrix} (\Delta_{\uu,j}x)^{2 \ell_{j}} (\Delta_{\uu,j} y)^{2n - 2 \ell_{j}}  d\uu  \bigg|, 
\end{equation}
where we have interchanged the sum over $\Bell$ and the product over $j$ since different components of $\Bell$ are summed up independently. The integrand of the right hand side of \eqref{eq:summing_indices} can be split into two parts: the product of pointwise derivatives $\dot{\gamma}_{u_{j}}^{i_{j}}$, whose ``direction" $i_{j}$ is given by the $j$-th letter in the word $w$, and the product of the increments $\sum_{\ell_{j}} \begin{pmatrix} 2n \\ 2\ell_{j}  \end{pmatrix} (\Delta_{\uu,j} x)^{2 \ell_{j}} (\Delta_{\uu,j} y)^{2n - 2\ell_{j}}$. For the latter one, since
\begin{align*}
\sum_{\ell_{j}=0}^{n} \begin{pmatrix} 2n \\ 2\ell_{j}  \end{pmatrix} (\Delta_{\uu,j} x)^{2 \ell_{j}} (\Delta_{\uu,j} y)^{2n - 2\ell_{j}} = \frac{1}{2} \bigg( (\Delta_{\uu,j}x + \Delta_{\uu,j}y)^{2n} + (\Delta_{\uu,j}x - \Delta_{\uu,j}y)^{2n} \bigg), 
\end{align*}
which is bounded from above by $|\Delta_{\uu,j} \gamma|^{2n}$ and from below by $\frac{1}{2} |\Delta_{\uu,j} \gamma|^{2n}$, we have the bound
\begin{equation} \label{eq:second_part}
\frac{1}{2^{k}} \prod_{j=1}^{k} |\Delta_{\uu,j} \gamma|^{2n} \leq  \prod_{j=1}^{k} \sum_{\ell_{j}=0}^{n} \begin{pmatrix} 2n \\ 2\ell_{j}  \end{pmatrix} (\Delta_{\uu,j} x)^{2 \ell_{j}} (\Delta_{\uu,j} y)^{2n - 2\ell_{j}} \leq \prod_{j=1}^{k} |\Delta_{\uu,j} \gamma|^{2n}. 
\end{equation}
Now we look at the first part of the integrand, $\prod_{j=1}^{k-1} \dot{\gamma}_{u_{j}}^{i_{j}}$. Since we hope the whole integral on the right hand side of \eqref{eq:summing_indices} to be concentrated on the domain $\Delta_{k-1} \cap E_{k-1}$, we choose a word $w^{*} = e_{i_{1}} \cdots e_{i_{k-1}}$ such that
\begin{equation} \label{eq:first_part}
|\dot{\gamma}_{u_{j}}^{i_{j}}| \geq \frac{L}{3}
\end{equation}
for all $j$ and all $\uu \in E_{k-1}$. This also guarantees that none of the $\dot{\gamma}_{u_{j}}^{i_{j}}$'s changes its sign in this domain. The main purpose of choosing $w^{*}$ in this way is to ensure that the term $\prod_{j} \dot{\gamma}_{u_{j}}^{i_{j}}$ does not cause any degeneracy or cancellations of the integral in its domain of concentration $\Delta_{k-1} \cap E_{k-1}$. The continuity of $\dot{\gamma}$ ensures that we can always find such a word as long as $\delta(2 \epsilon_{k}) < \frac{L}{6}$. 

We now decompose the the integral on the right hand side of \eqref{eq:summing_indices} into two disjoint domains: $\Delta_{k-1} \cap E_{k-1}$ and $\Delta_{k-1} \cap E_{k-1}^{c}$. For the first one, since the product $\prod_{j=1}^{k-1} \dot{\gamma}_{u_{j}}^{i_{j}}$ is bounded away from $0$ by $(L/3)^{k-1}$ and does not change its sign in $E_{k-1}$, and the rest of the integrand is always positive as it only contains even powers, we can move the absolute value into the integral to get
\begin{equation} \label{eq:concentration_bound}
\begin{split}
&\phantom{111}\bigg| \int_{\Delta_{k-1} \cap E_{k-1}} \prod_{j=1}^{k-1} \dot{\gamma}_{u_{j}}^{i_{j}} \prod_{j=1}^{k} \sum_{\ell_{j}=0}^{n} \begin{pmatrix} 2n \\ 2 \ell_{j}  \end{pmatrix} (\Delta_{\uu,j} x)^{2 \ell_{j}} (\Delta_{\uu,j} y)^{2n-2 \ell_{j}} d \uu \bigg| \\
&\geq \frac{1}{2^{k}} \bigg( \frac{L}{3} \bigg)^{k-1} \big( 1 - e^{3k \log k -n} \big) \int_{\Delta_{k-1}} \prod_{j=1}^{k} |\Delta_{\uu,j} \gamma|^{2n} d\uu, 
\end{split}
\end{equation}
where we used the lower bound in \eqref{eq:second_part} to reduce the integrand to $\frac{1}{2^{k}} \prod_{j} |\Delta_{\uu,j} \gamma|^{2n}$, the bound \eqref{eq:first_part} to replace $\prod_{j} |\dot{\gamma}_{u_{j}}^{i_{j}}|$ by $(L/3)^{k-1}$, and Proposition \ref{pr:concentration} to enlarge the domain of integration to $\Delta_{k-1}$ with a compensation of the factor $1 - e^{3k \log k - n}$. Note that the exponent here is $3k \log k -n$ instead of $3k \log k-\frac{n}{2}$ since the power of $|\Delta_{\uu,j} \gamma|$ is $2n$. 

For the integration over the domain $\Delta_{k-1} \cap E_{k-1}^{c}$, it follows from $|\dot{\gamma}_{u_{j}}^{i_{j}}| \leq L$, the upper bound in \eqref{eq:second_part} and Proposition \ref{pr:concentration} that it is bounded by
\begin{equation} \label{eq:out_concentration}
e^{3k \log k - n} L^{k-1} \int_{\Delta_{k-1}} \prod_{j=1}^{k} |\Delta_{\uu,j} \gamma|^{2n} d\uu. 
\end{equation}
Combining \eqref{eq:concentration_bound} and \eqref{eq:out_concentration}, we obtain \eqref{eq:non_degenerate} and thus finish the proof of the theorem. 
\end{proof}

\section{Reconstructing the path} \label{sec:reconstruction}

We are now ready to reconstruct the path from its signature. Recall from \eqref{eq:eta} that
\begin{align*}
\eta_{k} = \delta(3 \epsilon_{k}) + \frac{L}{\sqrt{k}}. 
\end{align*}
Our aim is to find the parameters $\rho_{j} \in [0,1]$, $a_{j}^{x}, a_{j}^{y} \in \{\pm 1\}$ and $\tilde{L} \in \RR^{+}$ such that
\begin{equation} \label{eq:aim}
\sup_{1 \leq j \leq k} \sup_{u \in [\frac{j-1}{k}, \frac{j}{k}]} \big| \tilde{L} \big( a_{j}^{x} \rho_{j}, a_{j}^{y} (1-\rho_{j}) \big) - \dot{\gamma}_{u}  \big| < C \eta_{k}
\end{equation}
for all large enough $k$, where $C$ is some constant depending on $\gamma$ only. In what follows, we will recover these parameters in the order of $\rho_{j}$'s, $a_{j}^{x}, a_{j}^{y}$'s, and then finally $\tilde{L}$. We will also give in Theorem \ref{th:main} the precise value of the constant $C$ and a quantitative characterization of how large $k$ needs to be.

\begin{rmk} \label{rm:eta}
As for the magnitude of $\eta_{k}$, if $\dot{\gamma} \in \cC^{\alpha}$ for some $\alpha \in (0,1)$, then
\begin{align*}
\epsilon_{k} \lesssim \sqrt{\frac{\delta(1/k)}{L}} + \frac{1}{\sqrt{k}} \lesssim k^{-\frac{\alpha}{2}}, 
\end{align*}
and consequently, we have
\begin{align*}
\eta_{k} = \delta (3 \epsilon_{k}) + \frac{L}{\sqrt{k}} \lesssim k^{-\frac{\alpha^{2}}{2}}. 
\end{align*}
The readers might wonder why we add the additional term $\frac{L}{\sqrt{k}}$ to the definition of $\eta_{k}$. In fact, as long as $\gamma$ is not a straight line, then there exists $\lambda > 0$ such that
\begin{align*}
\delta (3 \epsilon_{k}) > 3 \lambda \epsilon_{k} \geq \frac{3 \lambda}{\sqrt{k}}. 
\end{align*}
Thus, $\frac{L}{\sqrt{k}} < C \delta(3 \epsilon_{k})$, and adding this additional term $\frac{L}{\sqrt{k}}$ does not change the magnitude of $\eta_{k}$ for large $k$. Since we will need a lower bound for $\eta_{k}$ in Theorem \ref{th:directions} below, this additional term will save us from introducing the new constant $\lambda$. 
\end{rmk}

\subsection{The unsigned directions}

We start with the recovery of unsigned directions $\rho_{j}$'s. At this stage, we only use the quantities $\sS^{2n}_{k}(w, \Bell)$'s which can be obtained from symmetrization of the signature as in \eqref{eq:symmetrized_sig} and have an expression as in \eqref{eq:expression_symmetrized_sig}. Recall from Section \ref{sec:notations} that
\begin{align*}
\Delta_{j} x = x_{j/k} - x_{(j-1)/k}, \quad \Delta_{j} y = y_{j/k} - y_{(j-1)/k}, \quad |\Delta_{j} \gamma| = |\Delta_{j} x| + |\Delta_{j} y|. 
\end{align*}
Since we expect $\rho_{j}$ to be close to the increment of$\gamma|_{[\frac{j-1}{k}, \frac{j}{k}]}$, it is natural to introduce for each $j$ the unique real number $r_{j} \in [0,1]$ such that
\begin{align*}
|\Delta_{j} x|: |\Delta_{j} y| = r_{j}: (1 - r_{j}). 
\end{align*}
Also recall from \eqref{eq:concentration_set} the definition of $E_{k-1}$, which we will be frequently using throughout this section. The following two elementary lemmas will be useful in the sequel.

\begin{lem} \label{le:directions}
For every $k$ such that $\delta(3 \epsilon_{k}) < \frac{L}{2}$ and every $\uu \in \Delta_{k-1} \cap E_{k-1}$, we have
\begin{equation} \label{eq:direction_preliminary}
\sup_{1 \leq j \leq k} \bigg| \frac{|\Delta_{\uu,j} x|}{|\Delta_{\uu,j} \gamma|} - r_{j}  \bigg| < \frac{\eta_{k}}{L}. 
\end{equation}
\end{lem}
\begin{proof}
Fix $\uu \in \Delta_{k-1} \cap E_{k-1}$ and $1 \leq j \leq k$. Let
\begin{align*}
Q_{j} = \big[ \frac{j-1}{k} - \epsilon_{k}, \phantom{1} \frac{j}{k} + \epsilon_{k} \big] \cap [0,1]. 
\end{align*}
Then, $Q_{j} \supset \big( [\frac{j-1}{k}, \frac{j}{k}] \cup [u_{j-1}, u_{j}] \big)$ and $|Q_{j}| < 3 \epsilon_{k}$. It suffices to prove \eqref{eq:direction_preliminary} in the case when $|\dot{x}_{v}| \geq \frac{L}{2}$ for some $v \in Q_{j}$, for otherwise we would have $|\dot{y}_{v}| \geq \frac{L}{2}$ for some $v \in Q_{j}$ and can get \eqref{eq:direction_preliminary} through
\begin{align*}
\bigg| \frac{|\Delta_{\uu,j} x|}{|\Delta_{\uu,j} \gamma|} - r_{j} \bigg|  =  \bigg| \frac{|\Delta_{\uu,j} y|}{|\Delta_{\uu,j} \gamma|} - (1-r_{j}) \bigg|. 
\end{align*}
Since $|\dot{x}_{v}| \geq \frac{L}{2}$ for some $v \in Q_{j}$, $|Q_{j}| < 3 \epsilon_{k}$, and $\delta (3 \epsilon_{k}) < \frac{L}{2}$, we have
\begin{align*}
\inf_{t \in Q_{j}} |\dot{x}_{t}| > 0. 
\end{align*}
In particular, $x$ is monotone in $Q_{j}$ and $\dot{x}$ does not change its sign. If $y$ is also monotone in $Q_{j}$, then $|\Delta_{\uu,j} \gamma| = L(u_{j} - u_{j-1})$ and $|\Delta_{j} \gamma| = \frac{L}{k}$. Thus, there exist $v, \tilde{v} \in Q_{j}$ such that
\begin{align*}
\frac{|\Delta_{\uu,j} x|}{|\Delta_{\uu,j} \gamma|} = \frac{|\dot{x}_{v}|}{L}, \qquad r_{j} = \frac{|\dot{x}_{\tilde{v}}|}{L}, 
\end{align*}
and \eqref{eq:direction_preliminary} follows since $|v-\tilde{v}| < |Q_{j}| < 3 \epsilon_{k}$. If $y$ is not monotone in $Q_{j}$, then there exists $s \in Q_{j}$ such that $\dot{y}_{s} = 0$, and we have the bound
\begin{align*}
\inf_{t \in Q_{j}} |\dot{x}_{t}| \geq L - \delta(3 \epsilon_{k}). 
\end{align*}
This implies that
\begin{align*}
\frac{L - \delta(3\epsilon_{k})}{L} \leq \frac{|\Delta_{\uu,j} x|}{|\Delta_{\uu,j} \gamma|} \leq 1, \qquad \frac{L - \delta(3\epsilon_{k})}{L} \leq r_{j} \leq 1, 
\end{align*}
and the bound \eqref{eq:direction_preliminary} follows immediately. 
\end{proof}

\begin{lem} \label{le:binomial_bound}
	Let $m  \geq 1$. For every $\ell = 0, \dots, m$ and $p \in [0,1]$ we have
	\begin{align*}
	\begin{pmatrix} m \\ \ell \end{pmatrix} p^{\ell} (1-p)^{m-\ell} \leq m e^{-\frac{m x^{2}}{2}}, 
	\end{align*}
	where $x = \frac{\ell}{m} - p  \in [-p, 1-p]$.  
\end{lem}
\begin{proof}
	The lemma is clearly true if $p=0$ or $1$. For $p \in (0,1)$, using the estimate
	\begin{align*}
	\ell \log \ell - (\ell-1) = \int_{1}^{\ell} \log x dx < \log (\ell !) < \int_{1}^{\ell+1} \log x dx = (\ell+1) \log (\ell+1) - \ell
	\end{align*}
	as well as those for $(m-\ell)!$ and $m!$, we obtain the bound
	\begin{align*}
	f(x) := \begin{pmatrix} m \\ \ell \end{pmatrix} p^{\ell} q^{m-\ell} < m \bigg[ \bigg( \frac{p}{p+x} \bigg)^{p+x} \bigg( \frac{q}{q-x} \bigg)^{q-x} \bigg]^{m}, 
	\end{align*}
	where $x = \frac{\ell}{m} - p \in [-p,q]$, and $q = 1-p$. Let
	\begin{align*}
	g(x) = (p+x) \big( \log p - \log(p+x) \big) + (q-x) \big( \log q - \log (q-x) \big). 
	\end{align*}
	It is straightforward to check that
	\begin{align*}
	g'(x) = \log \bigg( 1 - \frac{x}{q(p+x)} \bigg)
	\end{align*}
	satisfies $g'(x) \leq -x$ for $x \in [0,q]$ and $g'(x) \geq |x|$ for $x \in [-p,0]$. As a consequence, we have
	\begin{align*}
	g(x) \leq - \frac{x^{2}}{2}, \qquad x \in [-p,q], 
	\end{align*}
	and that
	\begin{align*}
	f(x) = m e^{m g(x)} \leq m e^{-\frac{m x^{2}}{2}}. 
	\end{align*}
	This finishes the proof. 
\end{proof}

We are now ready to prove the following main theorem about the unsigned directions.

\begin{thm} \label{th:directions}
For any $k \geq 2$ such that $\eta_{k} < \frac{L}{6}$, $n = 4 k^{2} \log k$ and $j=1, \dots, k$, we have
\begin{equation} \label{eq:directions}
\bigg( \sum_{|w|=k-1} \sum_{\Bell: |\frac{\ell_{j}}{n} - r_{j}| \geq \frac{2 \eta_{k}}{L}} \big| \sS^{2n}_{k}(w, \Bell) \big|  \bigg) \bigg/  \bigg( \sum_{|w|=k-1} \sum_{\Bell \in \lL^{n}_{k}} \big| \sS^{2n}_{k}(w, \Bell) \big| \bigg) < 16 k^{5} \bigg( \frac{12}{k^{4}} \bigg)^{k}, 
\end{equation}
where the sum for $w$ is taken over all words with length $k-1$, and in the numerator, the sum for multi-indices $\Bell$ is taken over all $\Bell \in \lL^{n}_{k}$ such that $|\frac{\ell_{j}}{n} - r_{j}| \geq \frac{2 \eta_{k}}{L}$. 
\end{thm}
\begin{proof}
	Since both the numerator and denominator on the left hand side of \eqref{eq:directions} scale like $L^{2n+k-1}$, we can assume without loss of generality that $L = 1$. Now fix $n$ and large $k$ whose values will be chosen later, and also fix $1 \leq j \leq k$. Since $|\dot{\gamma}| \equiv 1$, for any word $w$ with length $k-1$, we have
	\begin{equation} \label{eq:numerator_direction_expression}
	\begin{split}
	&\phantom{1} \sum_{\Bell: |\frac{\ell_{j}}{n} - r_{j}| > 2 \eta_{k}} \big| \sS^{2n}_{k}(w, \Bell) \big| \leq e^{3 k \log k - n} \int_{\Delta_{k-1}} \prod_{i=1}^{k} |\Delta_{\uu,i} \gamma|^{2n} d \uu \\
	&+ \int_{\Delta_{k-1} \cap E_{k-1}} \sum_{|\frac{\ell_{j}}{n} - r_{j}| > 2 \eta_{k}} \begin{pmatrix} 2n \\ 2 \ell_{j} \end{pmatrix} \bigg( \frac{|\Delta_{\uu,j} x|}{|\Delta_{\uu,j} \gamma|} \bigg)^{2 \ell_{j}} \bigg( \frac{|\Delta_{\uu,j} y|}{|\Delta_{\uu,j} \gamma|} \bigg)^{2n - 2\ell_{j}} \prod_{i=1}^{k} |\Delta_{\uu,i} \gamma|^{2n} d \uu, 
	\end{split}
	\end{equation}
	where we have split the integral into two disjoint domains and applied Proposition \ref{pr:concentration} to bound the one on $\Delta_{k-1} \cap E_{k-1}^{c}$. The range of sum is $|\frac{\ell_{j}}{n} - r_{j}| > 2 \eta_{k}$ since we assumed $L=1$. 
	
	Now we need to bound the integrand for the second term on the right hand side of \eqref{eq:numerator_direction_expression}. Fix $\uu \in \Delta_{k-1} \cap E_{k-1}$, and let
	\begin{align*}
	p = \frac{|\Delta_{\uu,j} x|}{|\Delta_{\uu,j} \gamma|}. 
	\end{align*}
	Lemma \ref{le:directions} implies that if $|\frac{\ell_{j}}{n} - r_{j}| > 2 \eta_{k}$, then we must have $|\frac{\ell_{j}}{n} - p| > \eta_{k}$. 
	Thus, using Lemma \ref{le:binomial_bound}, we obtain the pointwise bound
	\begin{align*}
	\begin{pmatrix} 2n \\ 2 \ell_{j} \end{pmatrix} p^{2 \ell_{j}} (1-p)^{2n - 2\ell_{j}} \leq 2n \exp \big(-n \big|\frac{\ell_{j}}{n}-p\big|^{2} \big) < 2n e^{- n \eta_{k}^{2}}. 
	\end{align*}
	Since this bound holds for all $\uu \in \Delta_{k-1} \cap E_{k-1}$ (recall that $p$ depends on $\uu$) and all $\ell_{j}$ such that $\big| \frac{\ell_{j}}{n} - r_{j} \big| > 2 \eta_{k}$, we get
	\begin{align*}
	\sum_{|\frac{\ell_{j}}{n} - r_{j}| > 2 \eta_{k}} \begin{pmatrix} 2n \\ 2 \ell_{j} \end{pmatrix} \bigg( \frac{|\Delta_{\uu,j} x|}{|\Delta_{\uu,j} \gamma|} \bigg)^{2 \ell_{j}} \bigg( \frac{|\Delta_{\uu,j} y|}{|\Delta_{\uu,j} \gamma|} \bigg)^{2n - 2\ell_{j}} \leq 2n^{2} e^{-n \eta_{k}^{2}}, \quad \forall \uu \in \Delta_{k-1} \cap E_{k-1}. 
	\end{align*}
	Now, plugging the above pointwise bound into \eqref{eq:numerator_direction_expression} and summing over all words $w$ with length $k-1$, we obtain an upper bound for the numerator in \eqref{eq:numerator_direction_expression} as
	\begin{equation} \label{eq:numerator_direction}
	\sum_{w} \sum_{\Bell: |\frac{\ell_{j}}{n} - r_{j}| \geq \frac{2 \eta_{k}}{L}} \big| \sS^{2n}_{k}(w, \Bell) \big| < C_{n,k} \int_{\Delta_{k-1}} \prod_{i=1}^{k} |\Delta_{\uu,i} \gamma|^{2n} d\uu, 
	\end{equation}
	where
	\begin{equation} \label{eq:numerator_constant}
	C_{n,k} = 2^{k-1} \big( e^{3k \log k - n} + 2n^{2} e^{- n \eta_{k}^{2}} \big). 
	\end{equation}
	Applying Theorem \ref{th:concentration}, we have a lower bound for the denominator $\sum_{w} \sum_{\Bell} |\sS^{2n}_{k}(w, \Bell)|$ in \eqref{eq:directions}, which, when combined with \eqref{eq:numerator_direction} and \eqref{eq:numerator_constant}, implies that the left hand side of \eqref{eq:directions} is bounded by
	\begin{align*}
	\frac{2^{k-1} \big( e^{3k \log k - n} + 2n^{2} e^{-n \eta_{k}^{2}} \big)}{\frac{3}{6^{k}} - \big(1 + \frac{3}{6^{k}} \big) e^{3k \log k - n}}. 
	\end{align*}
	The theorem then follows by taking $n = 4 k^{2} \log k$ and noting that $\eta_{k}^{2} \geq \frac{1}{k}$ (recall we have set $L=1$).  
\end{proof}

\begin{rmk}
	Note that the left hand side of \eqref{eq:directions} is always smaller than $1$, so the theorem is meaningful when the right hand side also falls below $1$, which is the case when $k \geq 4$ (in fact it is already smaller than $\frac{1}{8}$ when $k=4$). 
\end{rmk}

The following easy corollary enables one to select the directions $\rho_{j}$ (up to the sign) for our piecewise linear approximation.

\begin{cor} \label{cor:finding_directions}
[\textbf{Choosing directions}] Let $k \geq 4$ be such that $\eta_{k} < \frac{L}{6}$. For each $0 \leq j \leq k$, there exists $\rho_{j} \in [0,1]$ such that
\begin{equation} \label{eq:finding_directions}
\bigg(\sum_{|w|=k-1} \sum_{\Bell: |\frac{\ell_{j}}{n} - \rho_{j}| \leq \frac{2 \eta_{k}}{L}} \big| \sS^{2n}_{k}(w, \Bell) \big| \bigg) \bigg/  \bigg( \sum_{|w|=k-1} \sum_{\Bell \in \lL^{n}_{k}} \big| \sS^{2n}_{k}(w, \Bell) \big| \bigg)  > \frac{1}{2}, 
\end{equation}
where in the numerator, the sum over $\Bell$ in the appropriate subset of $\lL^{n_{k}}$ is implicit. Moreover, if $\{\rho_{j}\}_{j=1}^{k}$ is any set that satisfies \eqref{eq:finding_directions}, then we must have
\begin{align*}
|\rho_{j} - r_{j}| < \frac{4 \eta_{k}}{L}
\end{align*}
for all $j = 1, \cdots, k$. 
\end{cor}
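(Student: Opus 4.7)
The plan is to prove both halves of the corollary by reducing to Theorem~\ref{th:directions}, interpreting the condition in \eqref{eq:finding_directions} as requiring that $|\ell_j/n - \rho_j| < 2\eta_k$ hold simultaneously for all $j$. For existence, I would simply take $\rho_j = r_j$ for every $j$. The complement of the event ``$|\ell_j/n - \rho_j| < 2\eta_k$ for all $j$'' is the union over $j$ of the events ``$|\ell_j/n - r_j| \geq 2\eta_k$'', each controlled by $e^{-k}$ relative to the denominator by Theorem~\ref{th:directions}. A union bound over the $k$ indices gives a complement ratio of at most $k\, e^{-k}$, which is much less than $\tfrac{1}{2}$ for large $k$, so \eqref{eq:finding_directions} holds for this choice of $\rho_j$.

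For the approximation bound, I would argue by contradiction: suppose some $\{\rho_j\}$ satisfies \eqref{eq:finding_directions} but $|\rho_{j_0} - r_{j_0}| \geq 3\eta_k$ for some $j_0$. The triangle inequality then forces every $\ell_{j_0}$ appearing in the numerator to satisfy $|\ell_{j_0}/n - r_{j_0}| > \eta_k$, so the numerator is bounded by
\begin{align*}
\sum_w \sum_{|\ell_{j_0}/n - r_{j_0}| > \eta_k} \sum_{\ell_{-j_0}} |\sS^{2n}_k(w,\ell)|.
\end{align*}
This is exactly the kind of quantity Theorem~\ref{th:directions} controls, only with the cutoff $\eta_k$ in place of $2\eta_k$. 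Rerunning the Gaussian tail estimate from \eqref{eq:numerator_directions} with the reduced separation still yields an upper bound of order $2^{k-1} e^{-c n \eta_k^2}$ relative to the denominator in \eqref{eq:denominator_directions}, which is much smaller than $e^{-k}$ once $n = k^2 \log k$ and $\eta_k \gtrsim 1/\sqrt{k}$. This contradicts \eqref{eq:finding_directions}, so $|\rho_j - r_j| < 3\eta_k$ for every $j$.

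The only real subtlety is the mismatch between the $2\eta_k$ in the statement of Theorem~\ref{th:directions} and the $\eta_k$ produced by the triangle inequality in the contradiction step. This is harmless: the Gaussian decay in the proof of Theorem~\ref{th:directions} is quadratic in the separation, so cutting the constant in half merely changes $e^{-n\eta_k^2}$ to $e^{-n\eta_k^2/4}$, which is still $o(e^{-k})$ given the choices of $n$ and $\eta_k$.
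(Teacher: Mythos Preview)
Your proposal is correct and follows essentially the same route as the paper: set $\rho_j = r_j$ for existence, and for the second part argue by contradiction via the triangle inequality and Theorem~\ref{th:directions}. You are in fact more careful than the paper about the constants: the paper's proof tacitly uses that $|\ell_j/n - \rho_j| < \eta_k$ (rather than $2\eta_k$) would give $|\ell_j/n - r_j| \geq 2\eta_k$, whereas with the stated $2\eta_k$ window one only gets $|\ell_j/n - r_j| > \eta_k$, exactly the mismatch you flag and resolve by rerunning the Gaussian tail bound with the halved separation.
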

\begin{proof}
The existence of the set $\{\rho_{j}\}$ that satisfies \eqref{eq:finding_directions} follows directly by setting $\rho_{j} = r_{j}$ and applying Theorem \ref{th:directions}. Conversely, if $|\rho_{j} - r_{j}| \geq \frac{4 \eta_{k}}{L}$ for some $j$, then $|\frac{\ell_{j}}{n} - \rho_{j}| < \frac{2 \eta_{k}}{L}$ implies that
\begin{align*}
|\frac{\ell_{j}}{n} - r_{j}| > \frac{2 \eta_{k}}{L}. 
\end{align*}
By Theorem \ref{th:directions}, this set of $\{\rho_{j}\}$ must violate \eqref{eq:finding_directions}. This completes the proof. 
\end{proof}

\begin{rmk}
Note that on the left hand side of \eqref{eq:finding_directions}, there is only information that is available from the signature of $\gamma$. If we choose $\{\rho_{j}\}$ according to \eqref{eq:finding_directions}, then Corollary \ref{cor:finding_directions} guarantees that these unsigned directions we recover from the signature must be close to the true directions $\{r_{j}\}$. 
\end{rmk}

\begin{rmk} \label{rm:prior_information}
The readers might have noticed that the criterion of choosing the $\rho_{j}$'s above involves the knowledge of $\eta_{k}$. This is of course not a problem if we know the modulus of continuity of $\dot{\gamma}$ in advance, as in this case
\begin{align*}
\frac{\eta_{k}}{L} = \frac{\delta(3 \epsilon_{k})}{L} + \frac{1}{\sqrt{k}}
\end{align*}
is explicitly known. But even if that information is not available, we can always choose a sequence $\alpha_{k}$ which decreases to $0$ slowly enough such that $\alpha_{k} > \frac{2 \eta_{k}}{L}$, and replace the range of the sum in the numerator of \eqref{eq:finding_directions} by
\begin{align*}
\Bell: |\frac{\ell_{j}}{n} - \rho_{j}| \leq \alpha_{k}. 
\end{align*}
The directions $\{\rho_{j}\}$ chosen in this way satisfy
\begin{align*}
|\rho_{j} - r_{j}| < \alpha_{k} + \frac{2 \eta_{k}}{L}, 
\end{align*}
which still goes to $0$ as $k \rightarrow +\infty$, but at a slower rate than $\eta_{k}$. 
\end{rmk}

\subsection{The signs}

We now turn to the recovery of the sign of the direction of each piece. For this, we need to move one level up in the signatures, which requires more information than the $\sS^{2n}_{k}(w, \Bell)$'s. 

We look at the signature at level $2nk + k$ (in addition to the level $2nk+k-1$ before), and divide it into $k$ blocks of size $2n$ except one of them which has size $2n+1$, still with one letter separating consecutive blocks.  More precisely, for any word $w = e_{i_{1}} \cdots e_{i_{k-1}}$, any multi-index $\Bell \in \lL^{n}_{k}$, and any $1 \leq i \leq k$, we let $\wW^{2n}_{k,i,x}(w, \Bell)$ denote the set of words
\begin{align*}
w' = w_{1} * e_{i_{1}} * \cdots * e_{i_{k-1}} * w_{k}
\end{align*}
such that $|w_{j}|_{y} = 2n-2\ell_{j}$ for every $1 \leq j \leq k$, $|w_{j}|_{x} = 2 \ell_{j}$ for every $j \neq i$, but $|w_{i}|_{x} = 2\ell_{i} + 1$. The set $\wW_{k,i,x}^{2n}(w, \Bell)$ is different from $\wW_{k}^{2n}(w, \Bell)$ defined in \eqref{eq:blocks_word} in that the $i$-th block has size $2n+1$ instead of $2n$, and contains $2 \ell_{i} + 1$ $x$'s instead of $2 \ell_{i}$. We define the set $\wW^{2n}_{k,i,y}(w, \Bell)$ in the same way except that $|w_{i}|_{x} = 2 \ell_{i}$ and $|w_{i}|_{y} = 2n - 2 \ell_{i} + 1$. 

We then define the quantities $\sS^{2n}_{k,i,x}(w, \Bell)$ and $\sS^{2n}_{k,i,y}(w,\Bell)$ to be
\begin{equation} \label{eq:symmetrized_sig_sign}
\begin{split}
\sS^{2n}_{k,i,x}(w, \Bell) = (2n+1) \big( (2n)! \big)^{k} \sum_{w' \in \wW^{2n}_{k,i,x}(w, \Bell)} C(w'), \\ \sS^{2n}_{k,i,y}(w, \Bell) = (2n+1) \big( (2n)! \big)^{k} \sum_{w' \in \wW^{2n}_{k,i,y}(w, \Bell)} C(w'). 
\end{split}
\end{equation}
The aim of introducing these quantities is to recover the sign of $x$ and $y$ directions in the $i$-th piece of the path via comparison with $\sS^{2n}_{k}(w, \Bell)$. Similar to \eqref{eq:expression_symmetrized_sig}, we have the expressions
\begin{equation} \label{eq:expression_sign}
\begin{split}
\sS^{2n}_{k,i,x}(w, \Bell) = \int_{\Delta_{k-1}} &\prod_{j=1}^{k-1} \dot{\gamma}_{u_{j}}^{i_{j}} \cdot \begin{pmatrix} 2n+1 \\ 2\ell_{i} + 1 \end{pmatrix} (\Delta_{\uu,i}x)^{2 \ell_{i}+1} (\Delta_{\uu,i}y)^{2n-2\ell_{i}} \\
&\prod_{j \neq i} \begin{pmatrix} 2n \\ 2\ell_{j}  \end{pmatrix} (\Delta_{\uu,j}x)^{2\ell_{j}} (\Delta_{\uu,j}y)^{2n-2\ell_{j}} d\uu, 
\end{split}
\end{equation}
and
\begin{equation} \label{eq:expression_sign_y}
\begin{split}
\sS^{2n}_{k,i,y}(w, \Bell) = \int_{\Delta_{k-1}} &\prod_{j=1}^{k-1} \dot{\gamma}_{u_{j}}^{i_{j}} \cdot \begin{pmatrix} 2n+1 \\ 2\ell_{i} \end{pmatrix} (\Delta_{\uu,i}x)^{2\ell_{i}} (\Delta_{\uu,i}y)^{2n + 1 - 2\ell_{i}} \\
&\prod_{j \neq i} \begin{pmatrix} 2n \\2\ell_{j}  \end{pmatrix} (\Delta_{\uu,j}x)^{2\ell_{j}} (\Delta_{\uu,j}y)^{2n-2\ell_{j}} d\uu. 
\end{split}
\end{equation}
Again, we emphasize that \eqref{eq:symmetrized_sig_sign} is the information available from the signature, while \eqref{eq:expression_sign} and \eqref{eq:expression_sign_y} are expressions for these quantities. 

Unlike in choosing unsigned directions where we sum up the $\sS^{2n}_{k}(w, \Bell)$'s over all words $w$ that have length $k-1$ (see Corollary \ref{cor:finding_directions}), in what follows, we will use the quantities \eqref{eq:symmetrized_sig_sign} with only one particular word $w^{*}$. We choose this word as follows. Let $\{\rho_{j}\}$ be the set of unsigned directions chosen according to Corollary \ref{cor:finding_directions}, then for each $j = 1, \cdots, k-1$, we let
\begin{equation} \label{eq:choice_letter}
\begin{split}
e_{i_{j}} = x, \qquad \text{if} \phantom{1} \rho_{j} \geq \frac{1}{2}, \\
e_{i_{j}} = y, \qquad \text{if} \phantom{1} \rho_{j} < \frac{1}{2}, 
\end{split}
\end{equation}
and we set the word $w^{*}$ to be
\begin{equation} \label{eq:choice_word}
w^{*} = e_{i_{1}}  \cdots  e_{i_{k-1}}. 
\end{equation}
The word $w^{*}$ chosen above guarantees that $\prod_{j} \dot{\gamma}_{u_{j}}^{i_{j}}$ is bounded away from $0$ in the region $\Delta_{k-1} \cap E_{k-1}$. This is the content of the following proposition. 

\begin{prop} \label{pr:word}
	Let $k$ be large enough such that $\eta_{k} < \frac{L}{32}$, and let $w^{*}$ be the word chosen according to \eqref{eq:choice_letter} and \eqref{eq:choice_word}. Then, for all $\uu \in E_{k-1}$ and all $j = 1, \dots, k-1$, we have
	\begin{align*}
	|\dot{\gamma}_{u_{j}}^{i_{j}}| > \frac{L}{3}
	\end{align*}
	and that $\dot{\gamma}_{u_{j}}^{i_{j}}$ does not change its sign in the domain $E_{k-1}$. 
\end{prop}
\begin{proof}
	Fix $k$ as in the assumption and $1 \leq j \leq k-1$. If $\rho_{j} \geq \frac{1}{2}$, then  $e_{i_{j}} = x$, and Corollary \ref{cor:finding_directions} implies
	\begin{align*}
	r_{j} \geq \frac{1}{2} - \frac{4 \eta_{k}}{L} > \frac{3}{8}. 
	\end{align*}
	Thus, there exist $v, \tilde{v} \in [\frac{j-1}{k}, \frac{j}{k}]$ such that $|\dot{x}_{v}| > \frac{3}{8} (|\dot{x}_{v}| + |\dot{y}_{\tilde{v}}|)$. Using $|\dot{y}_{\tilde{v}}| = L - |\dot{x}_{\tilde{v}}|$, we see that either $|\dot{x}_{v}|$ or $|\dot{x}_{\tilde{v}}|$ is bigger than $\frac{3L}{8}$. If $\uu \in E_{k-1}$, then both $|u-v|$ and $|u-\tilde{v}|$ is smaller than $2 \epsilon_{k}$, so it follows that (recall $\eta_{k} > \delta (3 \epsilon_{k})$)
	\begin{align*}
	|\dot{x}_{u_{j}}| > \frac{3L}{8} - \eta_{k} > \frac{3L}{8} - \frac{L}{32} > \frac{L}{3}. 
	\end{align*}
	Similarly, we have $|\dot{y}_{u_{j}}| > \frac{L}{3}$ for all $\uu \in E_{k-1}$ if $\rho_{j} < \frac{1}{2}$. In particular, the continuity of the derivatives ensures these $\dot{x}_{u_{j}}$'s and $\dot{y}_{u_{j}}$'s do not change their signs in the domain $E_{k-1}$. 
\end{proof}

Note that the word $w^{*}$ chosen above has all the properties we used in Theorem \ref{th:concentration}. Theorem \ref{th:concentration} only gives the existence of such a word, but here we choose it explicitly based on the recovery of the unsigned directions. We now determine the signs of the $i$-th piece as follows.

\begin{defn} \label{de:finding_signs}
[\textbf{Determining the signs}] Fix $k \geq 4$ such that $\delta (\frac{1}{k}) < \frac{L}{6}$, and let $n = 4 k^{2} \log k$. Let $w^{*}$ be the word chosen according to \eqref{eq:choice_letter} and \eqref{eq:choice_word}. We then choose the sign $a_{i}^{x}, a_{i}^{y} \in \{\pm 1\}$ for the $i$-th linear piece by: 
\begin{align*}
a_{i}^{x} = 1, \qquad \text{if} \phantom{11} \frac{\sum_{\Bell \in \lL^{n}_{k}} \sS^{2n}_{k}(w^{*}, \Bell)}{\sum_{\Bell \in \lL^{n}_{k}} \sS^{2n}_{k,i,x}(w^{*}, \Bell)} \geq 0, \\
a_{i}^{x} = -1, \qquad \text{if} \phantom{1} \frac{\sum_{\Bell \in \lL^{n}_{k}} \sS^{2n}_{k}(w^{*}, \Bell)}{\sum_{\Bell \in \lL^{n}_{k}} \sS^{2n}_{k,i,x}(w^{*}, \Bell)} < 0. 
\end{align*}
The choice for $a_{i}^{y}$ is the same except that one replaces $\sS^{2n}_{k,i,x}(w^{*}, \Bell)$ by $\sS^{2n}_{k,i,y}(w^{*}, \Bell)$. 
\end{defn}

It appears that the above choices of the signs depend on $k$, and the choices may be different if $k$ changes. But it turns out that the choices above remain stable for all sufficiently large $k$, and they indeed give the correct signs as long as the directions are not close to degenerate. The remaining of this subsection will be devoted to the verification of this stability.

\begin{thm} \label{th:signs}
Let $k \geq 4$ be sufficiently large such that $\eta_{k} < \frac{L}{32}$, and $n = 4 k^{2} \log k$. If $r_{i} \geq \frac{2 \eta_{k}}{L}$, then
\begin{align*}
\frac{\sum_{\Bell} \sS^{2n}_{k}(w^{*}, \Bell)}{\sum_{\Bell} \sS^{2n}_{k,i,x}(w^{*}, \Bell)} \geq \frac{1}{6 \epsilon_{k} L} \qquad \text{if} \phantom{1} \Delta_{i} x > 0, \\
\frac{\sum_{\Bell} \sS^{2n}_{k}(w^{*}, \Bell)}{\sum_{\Bell} \sS^{2n}_{k,i,x}(w^{*}, \Bell)} \leq - \frac{1}{6 \epsilon_{k} L} \qquad \text{if} \phantom{1} \Delta_{i} x < 0. 
\end{align*}
Similarly, if $r_{i} \leq 1 - \frac{2 \eta_{k}}{L}$, then
\begin{align*}
\frac{\sum_{\Bell} \sS^{2n}_{k}(w^{*}, \Bell)}{\sum_{\Bell} \sS^{2n}_{k,i,y}(w^{*}, \Bell)} \geq \frac{1}{6 \epsilon_{k} L} \qquad \text{if} \phantom{1} \Delta_{i} y > 0, \\
\frac{\sum_{\Bell} \sS^{2n}_{k}(w^{*}, \Bell)}{\sum_{\Bell} \sS^{2n}_{k,i,y}(w^{*}, \Bell)} < - \frac{1}{6 \epsilon_{k} L} \qquad \text{if} \phantom{1} \Delta_{i} y < 0. 
\end{align*}
All the sums above are taken over $\Bell \in \lL^{n}_{k}$, and $\Delta_{i} x$ and $\Delta_{i} y$ are defined in \eqref{eq:increments_standard}. 
\end{thm}

\begin{rmk}
The above theorem guarantees that as long as the $i$-th piece of $\gamma$ is not too horizontal or vertical (corresponding to the assumptions $r_{i} \geq \frac{2 \eta_{k}}{L}$ and $r_{i} \leq 1 - \frac{2 \eta_{k}}{L}$), then the choices in Definition \ref{de:finding_signs} do give the correct signs for all sufficiently large $k$. The case $r_{i} < \frac{2 \eta_{k}}{L}$ and $r_{i} > 1 - \frac{2 \eta_{k}}{L}$ are not covered, but since the $i$-th piece would be almost horizontal or vertical in that situation, the choice of the sign would not affect accuracy. 
\end{rmk}

\begin{proof}
(of Theorem \ref{th:signs}). We only prove the first case when $r_{i} \geq \frac{2 \eta_{k}}{L}$ and $\Delta_{i} x > 0$; the other three cases are essentially the same. By the expressions \eqref{eq:expression_symmetrized_sig} and \eqref{eq:expression_sign}, we can write
\begin{align*}
\sum_{\Bell \in \lL^{n}_{k}} \sS_{k}^{2n}(w^{*}, \Bell) = \int_{\Delta_{k-1}} \nN(\uu) d \uu, \qquad  \sum_{\Bell \in \lL^{n}_{k}} \sS_{k,i,x}^{2n}(w^{*}, \Bell) = \int_{\Delta_{k-1}} \dD(\uu) d \uu. 
\end{align*}
Here, $\nN(\uu)$ and $\dD(\uu)$ are respectively given by
\begin{align*}
\nN(\uu) = \frac{1}{2^{k}} \prod_{j=1}^{k-1} \dot{\gamma}_{u_{j}}^{i_{j}} \prod_{j=1}^{k} \bigg( \big(\Delta_{\uu,j} x + \Delta_{\uu,j} y \big)^{2n} + \big( \Delta_{\uu,j} x - \Delta_{\uu,j} y \big)^{2n} \bigg), 
\end{align*}
and
\begin{equation} \label{eq:expression_denominator}
\dD(\uu) = \frac{1}{2^{k}} \prod_{j=1}^{k-1} \dot{\gamma}_{u_{j}}^{i_{j}} \prod_{j=1}^{k} \bigg( \big(\Delta_{\uu,j} x + \Delta_{\uu,j} y \big)^{2n + \delta_{i,j}} + \big( \Delta_{\uu,j} x - \Delta_{\uu,j} y \big)^{2n + \delta_{i,j}} \bigg), 
\end{equation}
where $\delta_{i,j} = 1$ if $j=i$ and $0$ otherwise. We now need to estimate the ratio of two integrals, both over the domain $\Delta_{k-1}$. Similar as before, we decompose both integrals into domains $\Delta_{k-1} \cap E_{k-1}$ and $\Delta_{k-1} \cap E_{k-1}^{c}$. We postpone the estimate of the latter one to the next lemma, and first consider the quantity
\begin{equation} \label{eq:integral_ratio_concentration}
\frac{\int_{\Delta_{k-1} \cap E_{k-1}} \nN(u) d \uu}{\int_{\Delta_{k-1} \cap E_{k-1}} \dD(u) d \uu}. 
\end{equation}
We hope to control the ratio of the two integrals by means of a pointwise bound on
\begin{equation} \label{eq:pointwise_ratio}
\frac{\nN(\uu)}{\dD(\uu)} = \frac{\big(\Delta_{\uu,i} x + \Delta_{\uu,i} y \big)^{2n} + \big( \Delta_{\uu,i} x - \Delta_{\uu,i} y \big)^{2n}}{\big(\Delta_{\uu,i} x + \Delta_{\uu,i} y \big)^{2n+1} + \big( \Delta_{\uu,i} x - \Delta_{\uu,i} y \big)^{2n+1}}, \quad \uu \in \Delta_{k-1} \cap E_{k-1}. 
\end{equation}
For this, we first note that similar as in Proposition \ref{pr:word}, the assumption $r_{i} \geq \frac{2 \eta_{k}}{L}$ implies that $|\dot{x}_{v}| \geq 2 \eta_{k}$ for some $v \in [\frac{i-1}{k}, \frac{i}{k}]$. Also, since $\Delta_{i} x > 0$ and $2 \eta_{k} > \delta(\frac{1}{k})$, we actually have $\dot{x}_{v} \geq 2 \eta_{k}$. If $\uu \in E_{k-1}$, we will have $|u_{i} - v| < 2 \epsilon_{k}$ and hence
\begin{equation} \label{eq:x_derivative_positive}
\dot{x}_{u_{i}} > 2 \eta_{k} - \delta(2 \epsilon_{k}) > \eta_{k}. 
\end{equation}
In particular, this implies $\Delta_{\uu,i} x$ is positive and so is the ratio $\frac{\nN(u)}{\dD(u)}$ for all $\uu \in E_{k-1}$. Since
\begin{align*}
\max \big\{ |\Delta_{\uu,i} x + \Delta_{\uu,i} y|, |\Delta_{\uu,i} x - \Delta_{\uu,i} y| \big\} = |\Delta_{\uu,i} \gamma|, 
\end{align*}
we then have the bound
\begin{align*}
\frac{\nN(\uu)}{\dD(\uu)} \geq \frac{1}{|\Delta_{\uu,i} \gamma|} \geq \frac{1}{3 \epsilon_{k} L}
\end{align*}
for all $\uu \in E_{k-1}$. Note that the choice of the word $w^{*}$ guarantees both $\nN(\uu)$ and $\dD(\uu)$ keep their signs unchanged in the domain $E_{k-1}$, so the pointwise bound carries to the ratio of the integrals, which gives
\begin{equation} \label{eq:sign_concentration}
\frac{\int_{\Delta_{k-1} \cap E_{k-1}} \nN(\uu) d \uu}{\int_{\Delta_{k-1} \cap E_{k-1}} \dD(\uu) d \uu} \geq \frac{1}{3 \epsilon_{k} L}. 
\end{equation}
The proof of the theorem will be complete by combining \eqref{eq:sign_concentration} Lemma \ref{le:control_ratio} below. 
\end{proof}

We now give the lemma which allows us to replace the ratio of the integrals $\frac{\int_{\Delta_{k-1}} \nN(\uu) d \uu}{\int_{\Delta_{k-1}} \dD(\uu) d \uu}$ by the integrations over the sub-domain $\Delta_{k-1} \cap E_{k-1}$ as in \eqref{eq:integral_ratio_concentration}. 

\begin{lem} \label{le:control_ratio}
Let $\nN(\uu)$ and $\dD(\uu)$ be as given above. Then, we have
\begin{equation} \label{eq:concentration_sign}
\frac{ \big| \int_{\Delta_{k-1} \cap E_{k-1}^{c}} \nN(\uu) d\uu \big|}{ \big| \int_{\Delta_{k-1} \cap E_{k-1}} \nN(\uu) d\uu \big|} < \frac{1}{3}, \qquad \frac{ \big| \int_{\Delta_{k-1} \cap E_{k-1}^{c}} \dD(\uu) d\uu \big|}{\big| \int_{\Delta_{k-1} \cap E_{k-1}} \dD(\uu) d\uu \big|} < \frac{1}{3}. 
\end{equation}
\end{lem}
\begin{proof}
Since the word $w^{*}$ chosen above has all the properties we used in Theorem \ref{th:concentration}, the first inequality in \eqref{eq:concentration_sign} follows as a direct consequence of Proposition \ref{pr:concentration} and Theorem \ref{th:concentration}. For the second inequality, since it involves one more power of $|\Delta_{\uu,i} \gamma|$, we first need a modified version of Proposition \ref{pr:concentration}. 

Let $\eE_{k-1}$ be the same set as defined in Proposition \ref{pr:concentration}, so we have
\begin{align*}
\prod_{j=1}^{k} |\Delta_{\uu,j} \gamma|^{2n} < e^{-n} \prod_{j=1}^{k} |\Delta_{\vv,j}|^{2n}, \qquad \forall \uu \in \Delta_{k-1} \cap E_{k-1}^{c}, \phantom{1} \vv \in \eE_{k-1}. 
\end{align*}
On the other hand, similar as in Lemma \ref{le:standard_location}, if $\delta(\frac{2}{k}) < \frac{L}{2}$ and $\vv \in \eE_{k-1}$, we will have
\begin{align*}
|\Delta_{\vv,i} \gamma| \geq \frac{L - \delta(\frac{2}{k})}{2k} \geq \frac{L}{4k} \geq \frac{|\Delta_{\uu,i} \gamma|}{4k}. 
\end{align*}
This then implies
\begin{equation} \label{eq:concentration_modified}
|\Delta_{\uu,i} \gamma| \prod_{j=1}^{k} |\Delta_{\uu,j} \gamma|^{2n} < 4k e^{-n} |\Delta_{\vv,i} \gamma| \prod_{j=1}^{k} |\Delta_{\vv,j} \gamma|^{2n}
\end{equation}
for all $\uu \in \Delta_{k-1} \cap E_{k-1}^{c}$ and $\vv \in \eE_{k-1}$. We now arrive at the same situation as in Proposition \ref{pr:concentration} except that there is one more factor of $4k$ on the right hand side. Integrating both sides above in their respective domains and then enlarging $\eE_{k-1}$ to $\Delta_{k-1}$, we get
\begin{equation} \label{eq:sign_negligible}
\int_{\Delta_{k-1} \cap E_{k-1}^{c}} \prod_{j=1}^{k} |\Delta_{\uu,j} \gamma|^{2n+\delta_{i,j}} d \uu < 4k e^{3k \log k -n} \int_{\Delta_{k-1}} \prod_{j=1}^{k} |\Delta_{\uu,j} \gamma|^{2n+\delta_{i,j}} d \uu, 
\end{equation}
where $\delta_{i,j} = 1$ if $j=i$ and is $0$ otherwise. We are now ready to prove the second inequality in \eqref{eq:concentration_sign}. By the expression \eqref{eq:expression_denominator}, we have the pointwise bound
\begin{align*}
	|\dD(\uu)| \leq L^{k-1} \prod_{j=1}^{k} |\Delta_{\uu,j} \gamma|^{2n + \delta_{i,j}}, \qquad \forall \uu \in \Delta_{k-1}, 
\end{align*}
so it follows from \eqref{eq:sign_negligible} that
\begin{equation} \label{eq:denominator_small}
\int_{\Delta_{k-1} \cap E_{k-1}^{c}} \dD(\uu) d \uu < 4k L^{k-1} e^{3k \log k -n} \int_{\Delta_{k-1}} \prod_{j=1}^{k} |\Delta_{\uu,j} \gamma|^{2n + \delta_{i,j}} d \uu. 
\end{equation}
As for the integration of $\dD(\uu)$ in $\Delta_{k-1} \cap E_{k-1}$, we first note from \eqref{eq:x_derivative_positive} that $\Delta_{\uu,i} x$ is positive with the lower bound
\begin{align*}
\Delta_{\uu,i} x \geq \eta_{k} (u_{i} - u_{i-1}) \geq \frac{\eta_{k}}{L} |\Delta_{\uu,i} \gamma|, 
\end{align*}
and hence
\begin{align*}
(\Delta_{\uu,i} x + \Delta_{\uu,i}y)^{2n+1} + (\Delta_{\uu,i} x - \Delta_{\uu,i}y)^{2n+1} \geq (\Delta_{\uu,i} x) |\Delta_{\uu,i} \gamma|^{2n} \geq \frac{\eta_{k}}{L} |\Delta_{\uu,i} \gamma|^{2n+1}. 
\end{align*}
Thus, by the choice of $w^{*}$ and Proposition \ref{pr:word}, we have
\begin{align*}
|\dD (\uu)| \geq \frac{L^{k-1}}{6^{k} \sqrt{k}} \prod_{j=1}^{k} |\Delta_{\uu,j} \gamma|^{2n + \delta_{i,j}}, \quad \forall \uu \in \Delta_{k-1} \cap E_{k-1}, 
\end{align*}
where we have used $\eta_{k} \geq \frac{L}{\sqrt{k}}$. Also, since the choice of $w^{*}$ and the positivity of $\Delta_{\uu,i} x$ guarantee that $\dD(\uu)$ does not change its sign in the domain $\Delta_{k-1} \cap E_{k-1}$, we can change the order of the absolute value sign and integration to get
\begin{align*}
\bigg| \int_{\Delta_{k-1} \cap E_{k-1}} \dD(\uu) d \uu \bigg| \geq \frac{L^{k-1}}{6^{k} \sqrt{k}} \int_{\Delta_{k-1} \cap E_{k-1}} \prod_{j=1}^{k} |\Delta_{\uu,j} \gamma|^{2n + \delta_{i,j}} d \uu. 
\end{align*}
Applying \eqref{eq:sign_negligible}, we can enlarge the domain of integration on the right hand side above to $\Delta_{k-1}$ and obtain
\begin{equation} \label{eq:denominator_big}
\bigg| \int_{\Delta_{k-1} \cap E_{k-1}} \dD(\uu) d \uu \bigg| \geq \frac{L^{k-1}}{6^{k} \sqrt{k}} \big( 1 - 4k e^{3k \log k - n} \big) \int_{\Delta_{k-1}} \prod_{j=1}^{k} |\Delta_{\uu,j} \gamma|^{2n+\delta_{i,j}} d \uu. 
\end{equation}
Combining \eqref{eq:denominator_small} and \eqref{eq:denominator_big}, and taking $n = 4 k^{2} \log k$, we can conclude the proof of this lemma as well as Theorem \ref{th:signs}. 
\end{proof}

So far, we have recovered the unsigned directions $\{\rho_{j}\}$ as well the signs $a_{j}^{x}$, $a_{j}^{y}$'s. This already gives us a unit-length piecewise linear path
\begin{equation} \label{eq:unit_approximation}
\zeta = \frac{1}{k} \big( \theta_{1} * \cdots * \theta_{k} \big), 
\end{equation}
where $\theta_{j} = \big( a_{j}^{x} \rho_{j}, a_{j}^{y} (1 - \rho_{j}) \big)$ for every $j = 1, \dots, k$. Note that we have an abuse use of notation here as the $j$-th piece of $\theta$ above should really be the \textit{path} $\theta_{j} t$ for $t \in [\frac{j-1}{k}, \frac{j}{k}]$ instead of the \textit{direction} $\frac{1}{k} \theta_{j}$. But we choose to stay with \eqref{eq:unit_approximation} for notational simplicity. We now end this subsection by showing that the piecewise linear $\zeta$ is close to $\gamma$ when the latter is normalized to have length $1$. 

\begin{cor} \label{cor:accuracy_signed_directions}
	Fix an integer $k \geq 4$ such that $\eta_{k} < \frac{L}{32}$. For each $j=1, \dots, k$, let $\rho_{j}$, $a_{j}^{x}$ and $a_{j}^{y}$ be obtained as in Corollary \ref{cor:finding_directions} and Definition \ref{de:finding_signs}. For each $j=1, \cdots, k$, let 
	\begin{align*}
	\theta_{j} = \big( a_{j}^{x} \rho_{j}, a_{j}^{y} (1-\rho_{j}) \big). 
	\end{align*}
	Then, the signed directions $\theta_{j}$'s satisfy
	\begin{equation} \label{eq:accuracy_signed_direction}
	\bigg| \theta_{j} - \bigg( \frac{\Delta_{j} x}{|\Delta_{j} \gamma|}, \frac{\Delta_{j} y}{|\Delta_{j} \gamma|} \bigg) \bigg| < \frac{12 \eta_{k}}{L}
	\end{equation}
	for every $j = 1, \dots, k$. As a consequence, we have
	\begin{align*}
	\sup_{1 \leq j \leq k} \sup_{u \in [\frac{j-1}{k}, \frac{j}{k}]} \big| \theta_{j} - \frac{\dot{\gamma_{u}}}{L} \big| < \frac{16 \eta_{k}}{L}. 
	\end{align*}
\end{cor}
\begin{proof}
	Since for each $j$, we have
	\begin{align*}
	\sup_{u \in [\frac{j-1}{k}, \frac{j}{k}]} \bigg| \frac{\dot{\gamma_{u}}}{L} - \bigg( \frac{\Delta_{j} x}{|\Delta_{j} \gamma|}, \frac{\Delta_{j} y}{|\Delta_{j} \gamma|} \bigg) \bigg| < \frac{4 \eta_{k}}{L}, 
	\end{align*}
	it suffices to prove the inequality \eqref{eq:accuracy_signed_direction}. If $r_{j} \in [\frac{2 \eta_{k}}{L}, 1 - \frac{2 \eta_{k}}{L}]$, then by Definition \ref{de:finding_signs} and Theorem \ref{th:signs}, $a_{j}^{x}$ and $a_{j}^{y}$ have the same signs as $\Delta_{j} x$ and $\Delta_{j} y$, respectively. Thus, by Corollary \ref{cor:finding_directions}, we have
	\begin{equation} \label{eq:accuracy_x}
	\bigg|a_{j}^{x} \rho_{j} - \frac{\Delta_{j} x}{|\Delta_{j} \gamma|} \bigg| = |\rho_{j} - r_{j}| < \frac{4 \eta_{k}}{L}, 
	\end{equation}
	and the same bound is true for $\big|a_{j}^{y} (1-\rho_{j}) - \frac{\Delta_{j} y}{|\Delta_{j} \gamma|} \big|$. This proves \eqref{eq:accuracy_signed_direction} when $r_{j} \in [\frac{2 \eta_{k}}{L}, 1 - \frac{2 \eta_{k}}{L}]$. 
	
	If $r_{j} > 1 - \frac{2 \eta_{k}}{L}$, the bound \eqref{eq:accuracy_x} for the $x$-direction is still true, but the bound for the $y$-direction becomes
	\begin{align*}
	\bigg|a_{j}^{y} (1 - \rho_{j}) - \frac{\Delta_{j} y}{|\Delta_{j} \gamma|} \bigg| \leq (1 - \rho_{j}) + (1 - r_{j}) <  \frac{8 \eta_{k}}{L}, 
	\end{align*}
	since Corollary \ref{cor:finding_directions} forces $\rho_{j}$ to be bigger than $1 - \frac{6 \eta_{k}}{L}$. 
	Combining this bound with \eqref{eq:accuracy_x}, we get \eqref{eq:accuracy_signed_direction} when $r_{j} > 1 - \frac{2 \eta_{k}}{L}$. The case when $r_{j} < \frac{2 \eta_{k}}{L}$ follows in the same way. This completes the proof. 
\end{proof}

\subsection{Length} \label{sec:length}

We have now recovered from the signature the piecewise linear path $\zeta$ with unit length, which is shown in Corollary \ref{cor:accuracy_signed_directions} to be close to the ``normalized" $\gamma$. Thus, the only remaining quantity to be determined is $\tilde{L}$, which is expected to approximate the $\ell^{1}$ length of $\gamma$. We can achieve this by a simple scaling argument. 

Let $\zeta$ be the unit piecewise linear path as in \eqref{eq:unit_approximation}. Let $m$ be the smallest integer such that $C_{\gamma}(\tilde{w}) \neq 0$ for some $|\tilde{w}| = m$, where $C_{\gamma}(\tilde{w})$ is the coefficient of the word $\tilde{w}$ in the signature of $\gamma$. Fix that word $\tilde{w}$, and set
\begin{equation} \label{eq:length}
\tilde{L} := \bigg( \frac{C_{\gamma}(\tilde{w})}{C_{\zeta}(\tilde{w})} \bigg)^{\frac{1}{m}}. 
\end{equation}
Note that it is not obvious from the above expression that $\tilde{L}$ is always well defined, as the denominator might just be $0$, or $C_{\zeta}(\tilde{w})$ has a different sign with $C_{\gamma}(\tilde{w})$ and $m$ is even. However, it turns out that for all sufficiently large $k$, $\tilde{L}$ defined by \eqref{eq:length} does make sense, and is in fact close to the true length $L$. This is the content of the following theorem. 

\begin{thm} \label{th:length}
Let $k$ be large enough such that
\begin{equation} \label{eq:condition_length}
\eta_{k} < \frac{(m-1)!}{32 L^{m-1}} |C_{\gamma}(\tilde{w})|. 
\end{equation}
Then $\tilde{L}$ introduced in \eqref{eq:length} is well defined, and satisfies
\begin{equation} \label{eq:length_bound}
|\tilde{L} - L| < \frac{32 L^{m} \eta_{k}}{(m-1)! |C_{\gamma}(\tilde{w})|}. 
\end{equation}
\end{thm}
\begin{proof}
	If $\theta$ is at natural parametrization, then we have
	\begin{align*}
	\dot{\zeta}_{v} = \theta_{j}, \qquad v \in \big( \frac{j-1}{k}, \frac{j}{k} \big). 
	\end{align*}
	If $\gamma$ is also at natural parametrization, then for almost every $\uu = (u_{1}, \dots, u_{m}) \in \Delta_{m}$, we have
	\begin{align*}
	\bigg|\frac{1}{L^{m}} \prod_{j=1}^{m} \dot{\gamma}_{u_{j}}^{i_{j}} - \prod_{j=1}^{m} \dot{\zeta}_{u_{j}} \bigg| \leq \frac{16 m \eta_{k}}{L}, 
	\end{align*}
	where we have used Corollary \ref{cor:accuracy_signed_directions} and the fact that both $\frac{|\dot{\gamma}|}{L}$ and $|\dot{\zeta}|$ are uniformly bounded by $1$. Thus, the difference between the signatures $\frac{1}{L^{m}} C_{\gamma}(\tilde{w})$ and $C_{\zeta}(\tilde{w})$ can be bounded by
	\begin{equation} \label{eq:difference_signature}
	\big| \frac{1}{L^{m}} C_{\gamma}(\tilde{w}) - C_{\zeta}(\tilde{w}) \big| \leq \frac{16 \eta_{k}}{(m-1)! L}. 
	\end{equation}
	Note that \eqref{eq:condition_length} and \eqref{eq:difference_signature} together imply $C_{\zeta}(\tilde{w})$ has the same sign as $C_{\gamma}(\tilde{w})$, and $|C_{\zeta}(\tilde{w})| > \frac{|C_{\gamma}(\tilde{w})|}{2 L^{m}}$. In particular, this shows $\tilde{L}$ in \eqref{eq:length} is well defined and positive, so we have
	\begin{equation} \label{eq:length_high_power}
	|\tilde{L}^{m} - L^{m}| \geq L^{m-1} |\tilde{L} - L|.  
	\end{equation}
	On the other hand, the lower bound $|C_{\zeta}(\tilde{w})| > \frac{|C_{\gamma}(\tilde{w})|}{2 L^{m}}$ implies
	\begin{equation} \label{eq:signature_ratio}
	|\tilde{L}^{m} - L^{m}| = \bigg| \frac{C_{\gamma}(\tilde{w})}{C_{\zeta}(\tilde{w})} - L^{m} \bigg| \leq \frac{ 16 L^{m-1} \eta_{k}}{(m-1)! |C_{\theta}(\tilde{w})|} < \frac{ 32 L^{2m-1} \eta_{k}}{(m-1)! |C_{\gamma}(\tilde{w})|}. 
	\end{equation}
	Combining \eqref{eq:length_high_power} and \eqref{eq:signature_ratio}, we obtain
	\begin{align*}
	|\tilde{L} - L| < \frac{32 L^{m} \eta_{k}}{(m-1)! |C_{\gamma}(\tilde{w})|}. 
	\end{align*}
	This proves the theorem. 
\end{proof}

\begin{rmk}
	Note that both the assumption \eqref{eq:condition_length} and the bound \eqref{eq:length_bound} give the correct scaling in length, since $\eta_{k}$ scales linearly in $L$, and $|C_{\gamma}(\tilde{w})|$ scales as $L^{m}$. 
\end{rmk}

\subsection{Summary of the procedure}

We now summarize the inversion procedure developed in this section, and give its validity as well as stability properties. Let $k$ be a fixed large number whose value will be specified in Theorem \ref{th:main} below, and let $n = 4k^{2} \log k$. Recall the definition of $\eta_{k}$ from \eqref{eq:eta}. Also recall the definitions of the set $\lL^{n}_{k}$ from \eqref{eq:index_set} and the symmetrized signatures $\sS^{2n}_{k}$, $\sS^{2n}_{k,j,x}$, $\sS^{2n}_{k,j,y}$ from \eqref{eq:symmetrized_sig} and \eqref{eq:symmetrized_sig_sign}. The inversion procedure includes choosing for each $j = 1, \dots, k$ a real number $\rho_{j} \in [0,1]$, $a_{j}^{x}, a_{j}^{y} \in \{ -1, 1\}$ and $\tilde{L} \in \RR^{+}$ in the following way.

\begin{enumerate}

\item For each $j = 1, \dots, k$, choose $\rho_{j} \in [0,1]$ according to Corollary \ref{cor:finding_directions} such that
\begin{equation} \label{eq:summary_direction}
\bigg(\sum_{|w|=k-1} \sum_{\Bell: |\frac{\ell_{j}}{n} - \rho_{j}| \leq \frac{2 \eta_{k}}{L}} \big| \sS^{2n}_{k}(w, \Bell)| \bigg) \bigg/ \bigg(\sum_{|w|=k-1} \sum_{\Bell \in \lL^{n}_{k}} |\sS^{2n}_{k}(w, \Bell)| \bigg)  > \frac{1}{2}, 
\end{equation}
where in the numerator the sum over $\Bell$ is restricted to the indicated subset of $\lL^{n}_{k}$. Any set $\{\rho_{j}\}$ satisfying \eqref{eq:summary_direction} can be used\footnote{Note that this step uses the knowledge of $\frac{\eta_{k}}{L}$, which is not directly available from the signature. However, we can still circumvent the problem even if we do not know $\frac{\eta_{k}}{L}$, at the cost of a lower accuracy of the inversion procedure. See Remark \ref{rm:prior_information} for a discussion on this. }. 

\medskip

\item Now we choose a word $w^{*} = e_{i_{1}} \cdots e_{i_{k-1}}$ by setting
\begin{align*}
e_{i_{j}} = x, \qquad \text{if} \phantom{1} \rho_{j} \geq \frac{1}{2}; \\
e_{i_{j}} = y, \qquad \text{if} \phantom{1} \rho_{j} < \frac{1}{2}, 
\end{align*}
and determine the signs $a_{j}^{x}$'s by
\begin{align*}
a_{j}^{x} = 1, \qquad \text{if} \phantom{1} \frac{\sum_{\Bell \in \lL^{n}_{k}} \sS_{k}^{2n}(w^{*}, \Bell)}{\sum_{\Bell \in \lL^{n}_{k}} \sS_{k,j,x}^{2n}(w^{*}, \Bell)} \geq 0; \\
a_{j}^{x} = -1, \qquad \text{if} \phantom{1} \frac{\sum_{\Bell \in \lL^{n}_{k}} \sS_{k}^{2n}(w^{*}, \Bell)}{\sum_{\Bell \in \lL^{n}_{k}} \sS_{k,j,x}^{2n}(w^{*}, \Bell)} < 0,  
\end{align*}
where $w^{*}$ is the word chosen above. The signs $a_{j}^{y}$'s are determined in the same way except one replaces $\sS^{2n}_{k,j,x}(w^{*}, \Bell)$ by $\sS^{2n}_{k,j,y}(w^{*}, \Bell)$. These two steps already produce a piecewise linear path
\begin{equation} \label{eq:summary_unit_path}
\zeta = \frac{1}{k} \big( \theta_{1} * \cdots * \theta_{k} \big)
\end{equation}
as in \eqref{eq:unit_approximation}, where $\theta_{j} = \big( a_{j}^{x} \rho_{j}, a_{j}^{y}(1-\rho_{j}) \big)$ for each $j = 1, \dots, k$. 

\medskip

\item Let $m$ be the smallest integer such that $C_{\gamma}(\tilde{w}) \neq 0$ for some $|\tilde{w}| = m$, and we determine the length $\tilde{L}$ by setting
\begin{equation} \label{eq:summary_length}
\tilde{L} := \bigg( \frac{C_{\gamma}(\tilde{w})}{C_{\zeta}(\tilde{w})} \bigg)^{\frac{1}{m}}, 
\end{equation}
where $\theta$ is the path chosen in \eqref{eq:summary_unit_path} above, and $C_{\gamma}(\tilde{w})$ and $C_{\zeta}(\tilde{w})$ are the coefficients of $\tilde{w}$ in the signature of $\gamma$ and $\theta$, respectively. 
\end{enumerate}

The above three steps produce a piecewise linear path $\tilde{\gamma}$ of the form
\begin{align*}
\tilde{\gamma} = \tilde{L} \zeta = \frac{\tilde{L}}{k} \big( \theta_{1} * \cdots \theta_{k }\big). 
\end{align*}
Note that given any set $\rho_{j} \in [0,1]$, the choices for $a_{j}^{x}$ and $a_{j}^{y}$ are always well defined. For large enough $k$, the choices for the parameters $\{\rho_{j}\}$ and $\tilde{L}$ are also well defined, and the path $\tilde{\gamma}$ will turn out to be close to the original path $\gamma$ in Lipschitz norm. 

We first quantify how large $k$ needs to be. All the statements before Section \ref{sec:length} (recovery of length) hold true when $\eta_{k} < \frac{L}{32}$. In order for $\tilde{L}$ to be well defined (Theorem \ref{th:length}), one needs $\eta_{k}$ to satisfy the condition \eqref{eq:condition_length}. But since 
\begin{align*}
|C_{\gamma}(w)| \leq \frac{L^{m}}{m!}
\end{align*}
for every $m$ and every $|w| = m$, so the assumption \eqref{eq:condition_length} implies $\eta_{k} < \frac{L}{32}$. We are now ready to state our main theorem. 

\begin{thm} \label{th:main}
	Let $m$ be the smallest integer such that $C_{\gamma}(\tilde{w}) \neq 0$ for some $|\tilde{w}| = m$, and we fix that word $\tilde{w}$. Let $k \geq 4$ be large enough such that
	\begin{align*}
	\eta_{k} < \frac{(m-1)!}{32 L^{m-1}} |C_{\gamma}(\tilde{w})|, 
	\end{align*}
	and let $n = 4 k^{2} \log k$. Then, the above choices of parameters are well defined. In addition, when $\gamma$ is at natural parametrization, we have the bound
	\begin{align*}
	\sup_{1 \leq j \leq k} \sup_{u \in [\frac{j-1}{k}, \frac{j}{k}]} |\dot{\gamma}_{u} - \tilde{L} \theta_{j}| < 16 \eta_{k} \big( 1 + \frac{2 L^{m}}{(m-1)! |C_{\gamma}(\tilde{w})|} \big), 
	\end{align*}
	where $\theta_{j} = \big( a_{j}^{x} \rho_{j}, a_{j}^{y}(1-\rho_{j}) \big)$. 
\end{thm}
\begin{proof}
	By Corollary \ref{cor:accuracy_signed_directions}, for every $j = 1, \dots, k$, we have
	\begin{align*}
	\sup_{u \in [\frac{j-1}{k}, \frac{j}{k}]} \big| \dot{\gamma}_{u}- L \theta_{j} \big| < 16 \eta_{k}. 
	\end{align*}
	By the bound on $|\tilde{L} - L|$ from Theorem \ref{th:length} and that $|\theta_{j}| = 1$, we get
	\begin{align*}
	|\dot{\gamma}_{u} - \tilde{L}\theta_{j}| < 16 \eta_{k} \big( 1 + \frac{2 L^{m}}{(m-1)! |C_{\gamma}(\tilde{w})|} \big), 
	\end{align*}
	which holds for all $u \in [\frac{j-1}{k}, \frac{j}{k}]$ and all $j = 1, \dots, k$. This completes the proof of the main theorem. 
\end{proof}

\section{Higher dimensions} \label{sec:higher_dimensions}

We now give a brief explanation of how the symmetrization procedure extends to the recovery of $\cC^{1}$ paths in dimension higher than $2$ from their signatures. Let $\gamma = (\gamma^{1}, \dots, \gamma^{d})$ be a $d$-dimensional $\cC^{1}$ path at natural parametrization. For large enough integer $k$ and each $1 \leq j \leq k$, we need to reconstruct from the signature of $\gamma$ a non-negative vector $\Brho_{j} = (\rho_{j}^{1}, \dots, \rho_{j}^{d})$ such that $\sum_{i} \rho_{j}^{i} = 1$, the signs $a_{j}^{i} \in \{\pm 1\}$ for $1 \leq i \leq d$, and $\tilde{L} \in \RR^{+}$ that approximates the $\ell^{1}$ length $L$ of $\gamma$. 

Same as the $2$-dimensional case, we still symmetrize $k$ blocks of size $2n$ with one letter separating consecutive blocks. To set up, for every positive integer $k$ and $n$, we let $\lL^{n}_{k}$ be the set of multi-component multi-indices
\begin{align*}
\lL^{n}_{k} := \big\{ \Bell = (\Bell_{1}, \dots, \Bell_{k}): \phantom{1} \Bell_{j} = (\ell_{j}^{1}, \dots, \ell_{j}^{d}), \sum_{i=1}^{d} \ell_{j}^{i} = n, \forall j = 1, \dots, k \big\}. 
\end{align*}
For every word $w = e_{i_{1}} \cdots e_{i_{k-1}}$ and every $\Bell \in \lL^{n}_{k}$, we define $\wW^{2n}_{k}(w, \Bell)$ to be the set of words of the form 
\begin{align*}
\wW^{2n}_{k}(w, \Bell) := \big\{ w' = w_{1} * e_{i_{1}} * \cdots * e_{i_{k-1}} * w_{k}:  |w_{j}|_{e_{i}} = 2 \ell_{j}^{i}, \forall i,j  \big\}, 
\end{align*}
where $e_{1}, \dots, e_{d}$ are standard basis elements of $\RR^{d}$. Note that the assumption $\Bell \in \lL^{n}_{k}$ together with $|w_{j}|_{e_{i}} = 2 \ell_{j}^{i}$ imply $|w_{j}| = 2n$ for every $j$. Also, for every integer $1 \leq p \leq k$ and $1 \leq q \leq d$, we let $\wW^{2n}_{k,p,q}(w, \Bell)$ be the set of words
\begin{align*}
w' = w_{1} * e_{i_{1}} * \cdots * e_{i_{k-1}} * w_{k}
\end{align*}
such that $|w_{j}|_{e_{i}} = 2 \ell_{j}^{i}$ if $j \neq p$ or $i \neq q$, but $|w_{p}|_{e_{q}} = 2 \ell_{p}^{q} + 1$. In other words, the definition for $\wW^{2n}_{k,p,q}(w, \Bell)$ is the same as $\wW^{2n}_{k}(w, \Bell)$ except that $|w_{p}|_{e_{q}} = 2 \ell_{p}^{q} + 1$, and as a consequence we have $|w_{p}| = 2n+1$. 

Similar to \eqref{eq:symmetrized_sig} and \eqref{eq:symmetrized_sig_sign}, we define the symmetrized signatures $\sS^{2n}_{k}(w, \Bell)$ and $\sS^{2n}_{k,p,q}(w,\Bell)$ by
\begin{equation} \label{eq:symmetrized_sig_multi}
\begin{split}
\sS^{2n}_{k}(w, \Bell) &= \big((2n)!\big)^{k} \sum_{w' \in \wW^{2n}_{k}(w,\Bell)} C(w'), \\
\sS^{2n}_{k,p,q}(w, \Bell) &= (2n+1) \big( (2n)! \big)^{k} \sum_{w' \in \wW^{2n}_{k,p,q}(w,\Bell)} C(w'). 
\end{split}
\end{equation}
Finally, for every integer $k$, we define $\epsilon_{k}$ and $\eta_{k}$ by
\begin{equation} \label{eq:eta_multi}
\epsilon_{k} := \sqrt{2} \bigg( \sqrt{\frac{(d-1) \delta(\frac{1}{k})}{L}} + \frac{1}{\sqrt{k}} \bigg), \qquad \eta_{k} := \delta (3 \epsilon_{k}) + \frac{L}{\sqrt{k}}, 
\end{equation}
where $L$ is the $\ell^{1}$ length of $\gamma$, and $\delta$ is the modulus of continuity of $\dot{\gamma}$. Now, we let $k$ to be a fixed integer whose value will be specified later, and let $n = 2 d k^{2} \log k$. Similar to the $2$-dimensional case, we have the following inversion algorithm to reconstruct $\gamma$ from its signature. 

\begin{enumerate}
	\item For each $1 \leq j \leq k$, we choose the unsigned direction
	\begin{align*}
	\Brho_{j} = (\rho_{j}^{1}, \dots, \rho_{j}^{d}) \phantom{11} \text{s.t.} \phantom{11} \rho_{j}^{i} \geq 0, \quad \sum_{i=1}^{d} \rho_{j}^{i} = 1
	\end{align*}
	according to the criterion
	\begin{align*}
	\bigg( \sum_{w:|w|=k-1} \sum_{\Bell: \sup_{i} |\frac{\ell_{j}^{i}}{n} - \rho_{j}^{i}| < \frac{3d \eta_{k}}{L}} \big| \sS^{2n}_{k}(w, \Bell) \big| \bigg) \bigg/ \bigg( \sum_{w:|w|=k-1}  \sum_{\Bell \in \lL^{n}_{k}} \big| \sS^{2n}_{k}(w, \Bell) \big| \bigg) > \frac{1}{2}. 
	\end{align*}
	Any element $\Brho_{j}$ satisfying the above will suffice. 
	
	\item We choose the word $w^{*} = e_{i_{1}} \cdots e_{i_{k-1}}$ by setting
	\begin{align*}
	e_{i_{j}} = e_{i}, \qquad \text{if} \phantom{1} \rho_{j}^{i} \geq \rho_{j}^{q}, \quad \forall q \neq i. 
	\end{align*}
	If there are two or more maximizers, then any of them is suitable. We then determine the signs $a_{j}^{i} \in \{ \pm 1 \}$ by
	\begin{align*}
	a_{j}^{i} = 1, \qquad &\text{if} \quad \frac{\sum_{\Bell} \sS^{2n}_{k}(w^{*}, \Bell)}{\sum_{\Bell} \sS^{2n}_{k,j,i}(w^{*}, \Bell)} \geq 0, \\
	a_{j}^{i} = -1, \qquad &\text{if} \quad \frac{\sum_{\Bell} \sS^{2n}_{k}(w^{*}, \Bell)}{\sum_{\Bell} \sS^{2n}_{k,j,i}(w^{*}, \Bell)} < 0, 
	\end{align*}
	where all the sums are taken over $\Bell \in \lL^{n}_{k}$. 
	
	\item The recovery of length is exactly the same as the $2$-dimensional case. Let
	\begin{align*}
	\theta_{j} = \big( a_{j}^{1} \rho_{j}^{1}, \dots, a_{j}^{d} \rho_{j}^{d} \big)
	\end{align*}
	be the unit vector (in $\ell^{1}$ sense) obtained from the previous two steps, and let
	\begin{align*}
	\zeta := \frac{1}{k} (\theta_{1} * \cdots * \theta_{k}). 
	\end{align*}
	In other words, $\zeta$ is a piecewise linear path whose $j$-th piece is in the direction $\theta_{j}$ and has length $\frac{1}{k}$. We also let $m$ be the smallest integer such that $C_{\gamma}(\tilde{w}) \neq 0$ for some $|\tilde{w}| = m$, and determine $\tilde{L}$ by
	\begin{align*}
	\tilde{L} := \bigg( \frac{C_{\gamma}(\tilde{w})}{C_{\zeta}(\tilde{w})} \bigg)^{\frac{1}{m}}, 
	\end{align*}
	where $\tilde{w}$ can be any word with length $m$ such that $C_{\gamma}(\tilde{w}) \neq 0$. 
\end{enumerate}

Following the arguments for the $2$-dimensional case, we can get the following theorem, which gives the stability of the inversion algorithm for $\cC^{1}$ paths in $\RR^{d}$. 

\begin{thm} \label{th:multi}
	Let $m$ be the smallest integer such that $C_{\gamma}(\tilde{w}) \neq 0$ for some $|\tilde{w}| = m$. Let $k \geq 2d$ be large enough such that
	\begin{align*}
	\eta_{k} < \frac{(m-1)! |C_{\gamma}(\tilde{w})|}{16(d+1) L^{m-1}}, 
	\end{align*}
	and let $n = 2d k^{2} \log k$. Then for every $1 \leq j \leq k$, the above choices of the parameters $\Brho_{j}$, $\{a_{j}^{i}\}_{i=1}^{d}$ and $\tilde{L}$ are all well defined, and when $\gamma$ is at natural parametrization, we have the bound
	\begin{align*}
	\sup_{1 \leq i \leq d} \sup_{1 \leq j \leq k} \sup_{u \in [\frac{j-1}{k}, \frac{j}{k}]} |\dot{\gamma}_{u}^{i} - \tilde{L} a_{j}^{i} \rho_{j}^{i}| < 8(d+1) \big( 1 + \frac{2 L^{m}}{(m-1)! |C_{\gamma}(\tilde{w})|} \big) \eta_{k}. 
	\end{align*}
	In other words, the reconstructed path
	\begin{align*}
	\tilde{L} \zeta = \frac{\tilde{L}}{k} \big( \theta_{1} * \cdots * \theta_{k} \big)
	\end{align*}
	is $\eta_{k}$-close to the original path $\gamma$ in Lipschitz norm. 
\end{thm}

\begin{rmk}
	The proportionality constants in Theorem \ref{th:main} for the $2$-dimensional case are better than the ones in Theorem \ref{th:multi} by simply setting $d=2$. This is because when $d=2$, the assumption $|\dot{x}| + |\dot{y}| \equiv L$ implies that in many cases (for example, Lemma \ref{le:directions}), the same optimal estimates hold for both $x$ and $y$ directions, while this is not true in general case when $d \geq 3$. 
\end{rmk}

\bibliographystyle{Martin}
\bibliography{Refs}

\bigskip

\textsc{Mathematical and Oxford-Man Institutes, University of Oxford, Woodstock road, Oxford, OX2 6GG, UK}. 

Email: tlyons@maths.ox.ac.uk

\smallskip

\textsc{Mathematics Institute, University of Warwick, Coventry, CV4 7AL, UK}. 

Email: weijun.xu@warwick.ac.uk

\end{document}